\newtheorem{theorem}{Theorem}[section]
\newtheorem{corollary}[theorem]{Corollary}
\theoremstyle{definition}
\newtheorem{definition}[theorem]{Definition}
\newtheorem{example}[theorem]{Example}
\theoremstyle{remark}
\newtheorem{remark}[theorem]{Remark}
\numberwithin{equation}{section}
\newcommand{\CC}{{\mathbb C}}
\newcommand{\QQ}{{\mathbb Q}}
\newcommand\blfootnote[1]{%
  \begingroup
  \renewcommand\thefootnote{}\footnote{#1}%
  \addtocounter{footnote}{-1}%
  \endgroup
}
\begin{document}

\title{Matroids arisen from seeds}
\author{Fayadh Kadhem}
\address{School of Professional Studies\\
Bahrain Polytechnic\\
Isa Town, Bahrain}
\email{fayadh.kadhem@polytechnic.bh}
\blfootnote{Key words: matroid, cluster algebra, seed, cluster matroid.}
\blfootnote{School of Professional Studies, Bahrain Polytechnic, Isa Town, Bahrain.}
\subjclass{05B35,13F60}
\date{\today}

\begin{abstract}
This study aims to shed light on new (sub)classes of matroids originating from cluster algebras and investigate their properties. We focus on what we call \textit{cluster matroids} and build some results on them. Then, we point out a relationship between these kinds of matroids and uniform matroids and study their minors.
\end{abstract}

\maketitle

\section{Introduction}
\label{introduction}
The theory of matroids originated in 1935 by Whitney to abstract the notion of the linear independence of vector spaces. Since then, because of its interesting properties and applications, the matroid theory has formed one of the most active areas of algebraic combinatorics. On the other hand, cluster algebras were invented by Fomin and Zelevinsky in 2002 and have quickly received a lot of interest because of their significant applications and connections to different areas of mathematics. For instance, the applications of the theory of cluster algebras appear in representation theory, combinatorics, algebraic geometry, Poisson geometry, integrable systems, mathematical physics, and topology. Although both theories have the flavor of algebra and combinatorics and despite the existence of some works built on both theories, their immediate relationships have not yet been studied well.\\
In this study, we provide an overview of matroids in Section 2 and cluster algebras in Section 3. We then introduce some connections between them in Section 4. Next, we take a close look at the behavior of the minors of the main class of matroids in this study in Section 5.

\section{Matroid preliminaries}

This section introduces the theory of matroids and the needed results. The reader who is interested in a deeper look is referred to Oxley's book \cite{O}.
\begin{definition}
A \textit{matroid} $M$ is a pair $(E, \mathcal{B})$ where $E$ is a finite nonempty set called the \textit{ground set} and $\mathcal{B}$ is a subset of the power set of $E$ in which
\begin{enumerate}
    \item $\mathcal{B} \neq \emptyset$.
    \item If $B_1,B_2 \in \mathcal{B}$ and $x \in B_1 \setminus B_2$, then there is a $y \in B_2 \setminus B_1$ such that $(B_1 \setminus \{x\}) \cup \{y\} \in \mathcal{B}$.
\end{enumerate}
A member of $\mathcal{B}$ is called a \textit{basis} of the matroid $M$, while a subset $I$ of $E$ is called \textit{independent} if it is a subset of a basis. Any subset of $E$ that is not independent is called \textit{dependent}. Sometimes, we may write $\mathcal{B}(M)$ instead of $\mathcal{B}$ to emphasize that we are considering the set of bases of the matroid $M$.
\end{definition}

\begin{example}
Let $A$ be a matrix and $E$ be the set of column labels of $A$. Let $\mathcal{B}$ be the set of linearly independent sets of maximal size induced by the column labels of $A$. Then, $(E, \mathcal{B})$ is a matroid. A matroid is called \textit{representable} if it can be formed by the linear independence relations of a matrix.
\end{example}

\begin{remark}
It is not hard to see that the definition of a matroid is a generalization of the properties of bases of a linear space $V$. A matroid can be defined in other equivalent ways, such as the independent sets or \textit{circuits}, which are the minimal dependent sets.
\end{remark}

\begin{remark}
Let $M$ be a matroid whose ground set is $E$ and set of bases is $\mathcal{B}$. The pair $M^*=(E,\mathcal{B}^*)$, in which $\mathcal{B}^*=\{E \setminus B \mid B \in \mathcal{B} \}$, forms a matroid whose set of bases is $\mathcal{B}^*$. This matroid is called the \textit{dual matroid} of $M$. Bases, (in)dependent sets and circuits in the dual matroid are called cobases, co(in)dependent sets and cocircuits of the original matroid, respectively.
\end{remark}

Now, we recall the following definition:

\begin{definition}
Let $\mathbb{K}$ be an extension field of a field $\mathbb{F}$. Let $\alpha \in \mathbb{K}$ and let $E \subset \mathbb K$.
\begin{enumerate}
\item An element $\alpha$ is said to be \textit{algebraic} over $\mathbb{F}$ if there exists a nonzero polynomial $p \in \mathbb{F}[x]$ such that $p(\alpha)=0$. If no such polynomial exists, $\alpha$ is called \textit{transcendental} over $\mathbb{F}$.

\item A set $E$ is said to be \textit{algebraically independent} over $\mathbb{F}$ if there is no nonzero polynomial $p \in \mathbb{F}[x_1, x_2, \dots, x_n]$, where $n = |E|$, such that $p$ vanishes when evaluated at the elements of $E$. Otherwise, $E$ is said to be \textit{algebraically dependent} over $\mathbb{F}$.

\end{enumerate}
\end{definition} 

\begin{theorem}\label{matroid structure}
Let $\mathbb{K}$ be an extension field of a field $\mathbb{F}$ and $E \subset \mathbb{K}$ be finite. The collection $\mathcal{I}$ of subsets of $E$ that are algebraically independent over $\mathbb{F}$ forms a matroid on $E$ whose independent sets are the members of $\mathcal{I}$.
\end{theorem}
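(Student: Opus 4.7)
The plan is to verify the three standard independent-set axioms for $\mathcal{I}$, which are equivalent to the basis axioms given in the definition: (i) $\emptyset \in \mathcal{I}$, (ii) $\mathcal{I}$ is closed under taking subsets, and (iii) for $I, J \in \mathcal{I}$ with $|I| < |J|$, some $x \in J \setminus I$ satisfies $I \cup \{x\} \in \mathcal{I}$. Once these are established, the maximal members of $\mathcal{I}$ form the basis family $\mathcal{B}$ required by the definition.

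Axioms (i) and (ii) are immediate from the definition of algebraic independence: no nonzero polynomial vanishes on the empty tuple, and any polynomial witnessing dependence of $J \subseteq I$ also witnesses dependence of $I$ once the extra variables are treated as idle. The content of the proof lies in (iii), and the central tool is the \emph{algebraic exchange lemma}: if $\alpha \in \mathbb{K}$ is algebraic over $\mathbb{F}(S \cup \{\beta\})$ but not over $\mathbb{F}(S)$, then $\beta$ is algebraic over $\mathbb{F}(S \cup \{\alpha\})$. I expect this lemma to be the main obstacle. Its proof clears denominators to obtain a nonzero $q \in \mathbb{F}(S)[x, y]$ with $q(\alpha, \beta) = 0$, writes $q = \sum_j a_j(x) y^j$ with $a_j \in \mathbb{F}(S)[x]$, and uses the transcendence of $\alpha$ over $\mathbb{F}(S)$ to force some $a_j(\alpha) \neq 0$; the resulting nontrivial relation $\sum_j a_j(\alpha) \beta^j = 0$ exhibits $\beta$ as algebraic over $\mathbb{F}(S \cup \{\alpha\})$.

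With the exchange lemma in hand, I would prove a \emph{spanning lemma} by induction on $|X \setminus Y|$: if $X, Y \subseteq \mathbb{K}$, $Y$ is algebraically independent over $\mathbb{F}$, and $Y \subseteq \overline{\mathbb{F}(X)}$, then $|Y| \le |X|$. The base case $X \subseteq Y$ is immediate, since any $y \in Y \setminus X$ would be algebraic over $\mathbb{F}(X) \subseteq \mathbb{F}(Y \setminus \{y\})$ and contradict the independence of $Y$. For the inductive step, pick $x \in X \setminus Y$: if $Y \subseteq \overline{\mathbb{F}(X \setminus \{x\})}$ the result follows directly from the induction hypothesis, while otherwise some $y \in Y$ is algebraic over $\mathbb{F}(X)$ but not over $\mathbb{F}(X \setminus \{x\})$, and the exchange lemma then yields that $x$ is algebraic over $\mathbb{F}((X \setminus \{x\}) \cup \{y\})$, so the swapped set $X^{*} = (X \setminus \{x\}) \cup \{y\}$ still satisfies $Y \subseteq \overline{\mathbb{F}(X^{*})}$ but has strictly smaller $|X^{*} \setminus Y|$, closing the induction.

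Axiom (iii) is then an immediate corollary: if $I \cup \{x\} \notin \mathcal{I}$ for every $x \in J \setminus I$, then each such $x$ is algebraic over $\mathbb{F}(I)$, while each $x \in J \cap I$ lies in $\mathbb{F}(I)$ itself, so $J \subseteq \overline{\mathbb{F}(I)}$. The spanning lemma with $X = I$ and $Y = J$ then forces $|J| \le |I|$, contradicting the hypothesis $|I| < |J|$ and completing the argument.
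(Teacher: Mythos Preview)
The paper does not actually prove this theorem; it is stated in Section~2 as a background result from matroid theory (with Oxley's textbook \cite{O} as the general reference), so there is no ``paper's own proof'' to compare against. Your argument is the standard one and is correct: axioms (i) and (ii) are immediate, the algebraic exchange lemma is proved exactly as you sketch, and your spanning lemma (essentially the inequality $\operatorname{trdeg}_{\mathbb F}\mathbb F(Y)\le\operatorname{trdeg}_{\mathbb F}\mathbb F(X)$ when $Y\subseteq\overline{\mathbb F(X)}$) yields the augmentation axiom by contradiction. One small point worth making explicit in the final step: when $I$ is algebraically independent and $I\cup\{x\}$ is not, the witnessing polynomial must genuinely involve the variable corresponding to $x$, and its leading coefficient in that variable does not vanish at the elements of $I$ precisely because $I$ is independent; this is what guarantees $x\in\overline{\mathbb F(I)}$. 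With that detail filled in, the proof is complete.
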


\begin{definition}
A matroid $M$ is called \textit{connected} if every two elements of it lie in a common circuit or cocircuit.
\end{definition}

\begin{definition}
Let $M_1$ and $M_2$ be two matroids with disjoint ground sets. The \textit{direct sum} $M_1 \oplus M_2$ is the matroid whose ground sets is the union of the ground sets of $M_1$ and $M_2$ and whose bases are the union of their bases.
\end{definition}

\begin{theorem}
A matroid $M$ is connected if and only if it cannot be written as a direct sum of two matroids.
\end{theorem}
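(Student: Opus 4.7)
The plan is to prove each direction by contrapositive.

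For the forward direction, I would assume $M = M_1 \oplus M_2$ with both ground sets $E_1, E_2$ nonempty and deduce that $M$ is disconnected. By the definition of the direct sum, the independent sets of $M$ are exactly the unions $I_1 \cup I_2$ with $I_i$ independent in $M_i$, so a minimal dependent set cannot meet both $E_1$ and $E_2$; hence every circuit of $M$ lies in one of the summands. The same analysis of bases yields $(M_1 \oplus M_2)^* = M_1^* \oplus M_2^*$, so the corresponding statement holds for cocircuits. Picking $x \in E_1$ and $y \in E_2$, no circuit or cocircuit of $M$ can contain both, and $M$ is not connected.

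For the reverse direction, I would assume $M$ is disconnected and construct a nontrivial direct sum decomposition. Define a relation $\sim$ on $E$ by $x \sim y$ iff $x = y$ or some circuit of $M$ contains both. Reflexivity and symmetry are immediate. Transitivity is the main obstacle; I would prove it using the strong circuit-elimination axiom, which asserts that for distinct circuits $C_1, C_2$ with $e \in C_1 \cap C_2$ and $f \in C_1 \setminus C_2$, some circuit $C_3$ satisfies $f \in C_3 \subseteq (C_1 \cup C_2) \setminus \{e\}$. Given witnesses $C_{xy}$ and $C_{yz}$ for $x \sim y$ and $y \sim z$, the conclusion $x \sim z$ is either immediate (if one of the circuits already contains the third element) or follows by applying elimination on $y$ to obtain a circuit inside $(C_{xy} \cup C_{yz}) \setminus \{y\}$ that one can show contains both $x$ and $z$, possibly after a second elimination step.

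Once $\sim$ is an equivalence relation, let $E_1, \ldots, E_k$ be its classes and let $M_i$ be the matroid on $E_i$ whose independent sets are the subsets of $E_i$ independent in $M$. Since no circuit of $M$ meets two classes, a routine check of the basis-exchange axiom shows that the bases of $M$ are precisely the unions $B_1 \cup \cdots \cup B_k$ with $B_i$ a basis of $M_i$, giving $M = M_1 \oplus \cdots \oplus M_k$. Since $M$ is disconnected, there exist $x, y \in E$ with no common circuit, so they lie in distinct classes; hence $k \geq 2$ and the decomposition is nontrivial. The technical heart of the argument is the transitivity step, where the combinatorics of circuit elimination must be carried out carefully.
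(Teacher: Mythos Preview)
The paper does not actually prove this theorem. It appears in Section~2 (matroid preliminaries) as a background fact quoted without proof, with Oxley's textbook cited as the reference for such standard results. So there is nothing in the paper to compare your argument against.

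As an independent matter, your outline is the standard one and is essentially correct. The forward direction is fine as written: circuits (and, by $(M_1\oplus M_2)^*=M_1^*\oplus M_2^*$, cocircuits) of a direct sum are confined to a single summand, so elements from different summands share neither. For the reverse direction your plan is also the right one, and you correctly flag transitivity of the circuit relation as the crux. One caution: the phrase ``possibly after a second elimination step'' understates what is needed. A single application of strong circuit elimination to $C_{xy}$ and $C_{yz}$ at $y$ produces a circuit through $x$ inside $(C_{xy}\cup C_{yz})\setminus\{y\}$, but there is no reason this circuit must contain $z$, and a second naive elimination can run into the same problem. The clean arguments in the literature proceed either by a minimal-counterexample choice (minimizing, say, $|C_1\cup C_2|$ or $|C_1\setminus C_2|$ over suitable pairs) or by invoking an equivalent connectivity criterion via the rank function. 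If you intend to write this out in full, that is where the real work lies; the rest of your sketch (restrictions to the $\sim$-classes give the summands, and disconnectedness in the paper's ``circuit or cocircuit'' sense forces at least two classes) goes through without trouble.
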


\begin{definition}
Let $M=(E,\mathcal{B)}$ be a matroid and $X \subset E$.
\begin{enumerate}
    \item The \textit{restriction} of $M$ to $X$, denoted $M|X$ or $M \setminus (E \setminus X)$, is the matroid whose set of bases is the set of the maximal independent sets contained in $X$. In the case of singletons, we will denote $M|{\{e\}}$ and $M\setminus \{e\}$ by $M|{e}$ and $M\setminus e$, respectively. The operation of restricting $M$ to $X$ is called the \textit{deletion} of $E \setminus X$.
    \item The \textit{contraction} of $X$ is the matroid denoted by $M/X$ and given by $(M^* \setminus X)^*$. Similarly to the restriction, we will omit the set brackets when we deal with singletons.
    \item A \textit{minor} of $M$ is a matroid induced from $M$ by a sequence of contractions and restrictions.
\end{enumerate}
\end{definition}

\section{Cluster algebra overview}
This section introduces the notion of cluster algebra. For a wide overview, the reader is referred to \cite{FWZ,FZ}. We begin with the following sequence of definitions.
\begin{definition}
A \textit{(labeled) seed} is a pair $(\textbf{x},B)$ such that $\textbf{x}=(x_1,...,x_n,x_{n+1},...,x_m)$ is a tuple of algebraically independent variables generating a field isomorphic to the field $\mathbb{C}(x_1,...,x_n,x_{n+1},...,x_m)$. Also, $B$ is an $m \times n$ extended skew-symmetrizable matrix, that is, a matrix whose north $n \times n$ submatrix can be transformed to a skew-symmetric matrix by multiplying each row $r_i$ by a nonzero integer $d_i$. The matrix $B$ is called the \textit{exchange matrix} and the tuple $\textbf{x}$ is called the \textit{extended cluster}. The variables $x_1,...,x_n$ are called \textit{mutable}, while the variables $x_{n+1},...,x_m$ are called \textit{frozen}.
\end{definition}

\begin{remark}
In some cases, the field $\mathbb{C}(x_1,...,x_n,x_{n+1},...,x_m)$ of the previous definition is replaced by the field $\mathbb{Q}(x_1,...,x_n,x_{n+1},...,x_m)$. We will mainly deal with the first in this paper, but all the results still make sense for the second.
\end{remark}

\begin{definition}
Let $k$ be an index of a mutable variable of a seed $(\textbf{x},B)$. A \textit{mutation} at $k$ is a transformation to a new seed $(\textbf{x}',B')$ in which $B'$ is an $m \times n$ matrix whose entries are
\begin{equation}
    \label{eq1}
b'_{ij}=\begin {cases}
-b_{ij}, & \text{if}\ i=k \text{ or } j=k,\\ 
b_{ij}+\dfrac{|b_{ik}|b_{kj} + b_{ik}|b_{kj}|}{2}, & \text{otherwise};\\
\end{cases}
\end{equation}
and
$\textbf{x}'=(x_1',...,x_n',x_{n+1}',...,x_m')$ is a tuple such that $x_i'=x_i$ for $i \neq k$ and
$$x_k x_k' = \prod_{b_{ik}>0}x_i^{b_{ik}} + \prod_{b_{ik}<0}x_i^{-b_{ik}}.$$
The seed $(\textbf{x}',B')$ obtained by a mutation at $k$ is denoted sometimes by $\mu_k (\textbf{x},B)$. 
\end{definition}

\begin{remark}
It is not hard to see that the mutation of a seed provides a new seed. Moreover, mutating twice at the same index brings the original seed back. In symbols,
$$\mu_k (\mu_k (\textbf{x},B))=(\textbf{x},B).$$
\end{remark}

\begin{definition}
Let $(\textbf{x},B)$ be a seed. A \textit{cluster algebra (of geometric type)} attached to $(\textbf{x},B)$ is the polynomial algebra $\mathcal{A}=\mathbb{C}[x_{n+1},...,x_{m}][\chi]$, where $\chi$ is the set of all possible mutable variables, that is, the mutable variables of the original seed or a seed obtained by a mutation or a sequence of mutations. The seed $(\textbf{x},B)$ is called the \textit{initial seed}.
\end{definition}

\begin{remark}
By the properties of mutation and algebraically independent sets, it is not hard to see that the cluster algebra attached to some seed is the same cluster algebra attached to any seed mutation. Therefore, in the context of cluster algebras, we often fix an initial seed and describe the cluster algebra $\mathcal{A}$ by means of it. In terms of notation, we sometimes write $\mathcal{A}(\textbf{x},B)$ instead of $\mathcal{A}$, if there is an emphasis on the initial seed $(\textbf{x},B)$.
\end{remark}

\begin{definition}
The \textit{rank} of a seed or a cluster algebra attached to it is the number of mutable variables of its initial seed. A cluster algebra is \textit{of finite type} if it has finitely many seeds. Otherwise, it is \textit{of infinite type}.
\end{definition}

\begin{remark}
The finite type classification of cluster algebras is closely related to Lie Theory. In fact, the cluster algebras of finite type are classified by the Dynkin Diagrams, which are also the main objects classifying the semisimple complex Lie algebras.
\end{remark}

\begin{definition}
Let $B$ be an $n \times n$ square integer matrix. The \emph{Cartan counterpart} of $B$ is the matrix $A(B) = (a_{i,j})$ defined by:
\[
a_{i,i} := 2, \quad \text{and} \quad a_{i,j} := -|b_{i,j}| \ \text{if } i \neq j.
\]
\end{definition}

\begin{theorem}
A cluster algebra $\mathcal{A}$ is of finite type if and only if the Cartan counterpart of one of its seeds is a Cartan matrix of finite type, that is, of type $A_n$, $B_n$, $C_n$, $D_n$, $E_6$, $E_7$, $E_8$, $F_4$, or $G_2$.
\end{theorem}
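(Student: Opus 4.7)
This is the Fomin--Zelevinsky finite type classification, whose original proof occupies an entire paper; any honest plan must lean on substantial structural machinery. I would split the biconditional into two implications and treat them separately, using root-system bijections for the ``if'' direction and a rank-$2$ infinitude argument for the ``only if'' direction.

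For the ``if'' direction, fix a seed $(\mathbf{x},B)$ whose Cartan counterpart $A(B)$ is of finite type $X_n$, and let $\Phi$ be the associated root system, with simple roots $\alpha_1,\ldots,\alpha_n$ and set of almost positive roots $\Phi_{\geq -1} := \Phi_{>0} \cup \{-\alpha_1,\ldots,-\alpha_n\}$. The plan is to construct a bijection between cluster variables of $\mathcal{A}$ and $\Phi_{\geq -1}$, sending each initial variable $x_i$ to $-\alpha_i$ and every other cluster variable to the positive root whose coordinates, in the basis $\alpha_1,\ldots,\alpha_n$, form the denominator vector of its Laurent expansion in $x_1,\ldots,x_n$. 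Combinatorially, mutation at $k$ should match the ``piecewise-linear'' involution on $\Phi_{\geq -1}$ that flips the sign of the $\alpha_k$-coordinate. Once the bijection is in place, the finiteness of $\Phi$ (by Cartan--Killing) forces $\mathcal{A}$ to have only finitely many cluster variables, hence finitely many seeds.

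For the ``only if'' direction, I would argue the contrapositive: if no seed in the mutation class of $(\mathbf{x},B)$ has a finite-type Cartan counterpart, then $\mathcal{A}$ admits infinitely many distinct seeds. The engine is the rank-$2$ analysis. For $b,c > 0$ with $bc \geq 4$, iterated mutation of the exchange matrix $\bigl(\begin{smallmatrix} 0 & b \\ -c & 0 \end{smallmatrix}\bigr)$ produces a non-periodic orbit of cluster variables (the affine case $bc=4$ already suffices), giving infinitely many distinct seeds. For higher rank, one locates a $2 \times 2$ principal submatrix in some mutation of $B$ whose Cartan counterpart is of affine or indefinite type---its existence is guaranteed by the hypothesis together with the classification of affine Dynkin diagrams---and lifts the rank-$2$ infinitude up to $\mathcal{A}$ by showing that these rank-$2$ seeds embed into distinct seeds of the ambient cluster algebra.

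The main obstacle is the forward direction: the denominator-based bijection with $\Phi_{\geq -1}$ rests on the Laurent phenomenon and on an explicit positive denominator formula for cluster variables in finite type, both of which are deep results of \cite{FZ, FWZ}. A secondary subtlety is that the hypothesis only requires a finite-type Cartan counterpart in \emph{one} seed, so one must verify that this single certificate propagates---equivalently, that within the mutation class of a finite-type exchange matrix there is always a representative of classical Dynkin shape, a fact that is itself part of the classification. Given the depth of these inputs, in a short paper I would expect the proof to be handled by invoking the Fomin--Zelevinsky theorem directly rather than reproducing the argument in full.
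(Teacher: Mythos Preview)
Your proposal is sound as a high-level outline of the Fomin--Zelevinsky classification, and your closing remark is exactly right: the paper does not prove this theorem at all. It appears in Section~3 as a background result from the literature (the reference is \cite{FZ2}), stated without proof as part of the overview of cluster algebras. So there is nothing to compare your argument against; the paper simply invokes the theorem.
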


\begin{definition}
For a cluster algebra $\mathcal{A}$, a \textit{cluster monomial} is a monomial consisting of variables from a single seed.
\end{definition}

\section{Cluster matroids}
In this section, we investigate a matroid structure in the set of extended clusters and build a connection between the two topics from there. Throughout, the terms ``cluster algebra" and ``seed" mean a cluster algebra and a seed of finite type.
\begin{example} This example produces a way to give the Grassmannian $\textnormal{Gr}_{2,n}$ a cluster algebra. We skip some details here. For a wider overview, the reader is encouraged to see \cite{FWZ} Section 1.2.  
Consider the octagon of Figure \ref{fig 1}. A \textit{triangulation} of the octagon is a shape obtained by drawing a maximal number of pairwise non-crossing diagonals. It is easily seen that any triangulation of the octagon produces exactly 5 non-crossing diagonals, one of them is the one in Figure \ref{fig 1}. More generally, a triangulation of an $m$-gon produces exactly $m-3$ non-crossing diagonals. Now, this octagon forms a combinatorial way of describing a seed whose frozen variables are the sides of the octagon and whose mutable variables are the non-crossing diagonals. The mutation of seeds here corresponds to diagonal flipping. For instance, flipping the diagonal $P_{58}$ to $P_{16}$ corresponds to another triangulation that is a mutation of the first one at the variable $P_{58}$. Another example is to flip $P_{68}$ to $P_{57}$ and so on.
\begin{figure}
    \centering
    \begin{tikzpicture}
    [dot/.style={circle,fill,inner sep=0pt,minimum size=4pt}]
 \draw node[regular polygon,regular polygon sides=8,draw,minimum size=4cm]
 (p8){}
  foreach \X[count=\Y] in {1,2,3,8} {(p8.corner \X) node[dot](\X){} node[above]{\X}}
  foreach \X[count=\Y] in {4,5,6,7} {(p8.corner \X) node[dot](\X){} node[below]{\X}}
  foreach \X[count=\Y] in {5} {(p8.corner \X) node[dot](n-\X){}
    -- (p8.corner \Y) node[dot](n-\Y){} node[midway,sloped,above] {$P_{15}$}}
    {(p8.corner 3) node[dot](n-3){}
    -- (p8.corner 1) node[dot](n-1){} 
    node[midway,sloped,below] {$P_{13}$}}
    {(p8.corner 3) node[dot](n-3){}
    -- (p8.corner 5) node[dot](n-5){} 
    node[midway,sloped,above] {$P_{35}$}}
    {(p8.corner 6) node[dot](n-6){}
    -- (p8.corner 8) node[dot](n-8){} node[midway,sloped,above] {$P_{68}$}}
    {(p8.corner 5) node[dot](n-5){}
    -- (p8.corner 8) node[dot](n-8){} node[midway,sloped,above] {$P_{58}$}}
    ;
\end{tikzpicture}
    \caption{A triangulation of the octagon.}
    \label{fig 1}
\end{figure}
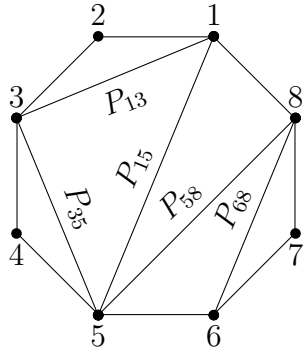

Na\"ively, one might think that this gives rise to a matroid on the set of diagonals of an $n$-gon by taking as the bases those edges that are the edges of a triangulation, but this turns out to be false. In fact, consider the triangulation in Figure \ref{fig 2}. If $P_{14}$ is removed from this triangulation, then there is no diagonal from the triangulation of Figure \ref{fig 1} that can be inserted and give a new triangulation. Hence, the second axiom of matroid bases is not satisfied and this is not a matroid. However, we will be able to solve this issue in the next remark.

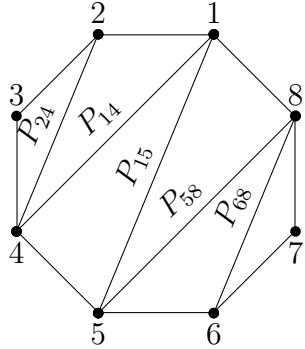
\begin{figure}
    \centering
\begin{tikzpicture}
    [dot/.style={circle,fill,inner sep=0pt,minimum size=4pt}]
 \draw node[regular polygon,regular polygon sides=8,draw,minimum size=4cm]
 (p8){}
  foreach \X[count=\Y] in {1,2,3,8} {(p8.corner \X) node[dot](\X){} node[above]{\X}}
  foreach \X[count=\Y] in {4,5,6,7} {(p8.corner \X) node[dot](\X){} node[below]{\X}}
  foreach \X[count=\Y] in {5} {(p8.corner \X) node[dot](n-\X){}
    -- (p8.corner \Y) node[dot](n-\Y){} node[midway,sloped,above] {$P_{15}$}}
    {(p8.corner 4) node[dot](n-3){}
    -- (p8.corner 1) node[dot](n-1){} 
    node[midway,sloped,above] {$P_{14}$}}
    {(p8.corner 2) node[dot](n-1){}
    -- (p8.corner 4) node[dot](n-4){} 
    node[midway,sloped,above] {$P_{24}$}}
    {(p8.corner 6) node[dot](n-6){}
    -- (p8.corner 8) node[dot](n-8){} node[midway,sloped,above] {$P_{68}$}}
    {(p8.corner 5) node[dot](n-5){}
    -- (p8.corner 8) node[dot](n-8){} node[midway,sloped,above] {$P_{58}$}}
    ;
\end{tikzpicture}
\caption{Another triangulation of the octagon.}
\label{fig 2}
\end{figure}
\end{example}
\begin{remark}
From Theorem \ref{matroid structure}, it is straightforward to see that the set of extended clusters of some cluster algebra induces a matroid. Indeed, define the bases of this matroid to be all maximal algebraically independent sets containing the frozen variables, where the ground set is the set of all possible variables generated by any sequence of mutations. We will call this a \textit{cluster matroid}.
\end{remark}

\begin{remark}
Since cluster algebras of type $A_n$ are combinatorially described by the triangulations of an $(n+3)$-gon and since the $(2n+3)$-quantities attached to the triangulations are algebraically independent, one can see that this can give a matroid on any $n$-gon using the algebraic independence and the definition of cluster matroids of the previous remark. 
\end{remark}

\begin{example}[Rank 1] (c.f. Example 3.2.2 of \cite{FWZ})
A cluster algebra of rank 1 has exactly two mutable variables, say $x_1$ and $x'_1$, each of them is a mutation of the other. It can have any number of frozen variables. Any $m \times 1$ matrix $B$ with top entry 0 is a possible extended exchange matrix for such a cluster algebra. Of course, $x_1$ and $x_1'$ are related by the mutation relation $x_1x_1'= \prod_i x_i^{b_i} + \prod_j x_j^{-b_j}$ where $i \neq j$ and $b_i > 0$ and $b_j <0$ for all $i$ and $j$. This cluster algebra is generated by the variables $x_1,x_1',x_2,...,x_m$ and lives inside $\CC(x_1,x_2,...,x_m)=\CC(x_1',x_2,...,x_m)$. The matroid attached to this cluster algebra is given by the ground set $E=\{x_1,x_1',x_2,...,x_m\}$ and the bases $B_1=\{x_1,x_2,...,x_m \}$ and $B_2=\{x_1',x_2,...,x_m \}$. A more concrete example is the coordinate ring of the subgroup of unipotent upper triangular matrices
$$U^+= \left\{ \begin{bmatrix}
1 & a & b\\
0 & 1 & c\\
0 & 0 & 1
\end{bmatrix}
\right\} \subset SL_3.$$
This coordinate ring is $\CC[a,b,c]$ and it forms a cluster algebra of rank 1 whose extended clusters are $\{a,b,ac-b\}$ and $\{c,b,ac-b\}$. Clearly, the mutable variables are $a$ and $c$ and the frozen variables are $b$ and $ac-b$.
 \end{example}
 \begin{example} \label{uniform}
 Let $(x_1,x_2)$ be a seed in which both of its variables are mutable. Let
 $$B=\begin{bmatrix}
 0 & 1\\
 -1 & 0
\end{bmatrix}$$
be the exchange matrix attached to this seed. It is not hard to see that the list of all possible cluster variables is: $x_1$, $x_2$, $\dfrac{1+x_2}{x_1}$, $\dfrac{1+x_1+x_2}{x_1x_2}$ and $\dfrac{1+x_1}{x_2}$. 

Any 2-set of elements of this list is a basis of the induced cluster matroid. This matroid is denoted by $U_{2,5}$. More generally, for $n \leq m$, the \textit{uniform matroid} $U_{n,m}$ is the matroid whose ground set is $\{1,...,m \}$ such that any $n$-subset forms a basis.
 \end{example}
 
 \begin{remark}
In general, cluster matroids are not closed under duality. This is because the number of seeds is fully determined by the exchange matrix together with the cluster algebra rank. Thus, if the rank is changed, then the number of seeds, and hence the size of the matroid, will be different.
 \end{remark}

 \begin{remark}
The matroids introduced in Theorem \ref{matroid structure} are called \textit{algebraic}. There are many open questions about them and their duals. We believe that the study of cluster matroids can help answer some of these open questions. One can see \cite{O} for a deeper look at algebraic matroids.
 \end{remark}

Now, we note the following results of cluster matroids:

\begin{theorem}
A cluster matroid of size greater than or equal to 2 is connected if and only if it has no frozen variables.
\end{theorem}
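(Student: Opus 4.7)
My plan is to prove both directions separately. For the forward direction (from the existence of frozen variables to disconnectedness), I would observe that every frozen variable $x_j$ is a coloop of the cluster matroid $M$: by definition every basis contains all frozen variables, so $x_j$ lies in every basis. A standard matroid fact is that a coloop belongs to no circuit, and the only cocircuit containing a coloop $e$ is the singleton $\{e\}$, since $E \setminus \{e\}$ is the unique hyperplane missing $e$. Hence $x_j$ shares no circuit or cocircuit with any other element, so the connectedness condition fails whenever $|E| \geq 2$.

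For the reverse direction, I would argue by contradiction. Assume $M$ has no frozen variables but is disconnected, so $M = M|E_1 \oplus M|E_2$ with $E_1, E_2 \neq \emptyset$. A transcendence-degree computation shows that $E_1$ and $E_2$ are algebraically disjoint over $\CC$. Fix an initial cluster $C$; since $C$ is a basis of $M$, it splits as $C = C_1 \sqcup C_2$ with $C_i := C \cap E_i$, and a size comparison against the direct-sum rank formula forces $|C_i| = r(M|E_i) \geq 1$, so both parts are non-empty. Mutating at an index $k$ with $x_k \in C_1$, the same size comparison applied to the new cluster $C' = (C \setminus \{x_k\}) \cup \{x_k'\}$ forces $x_k' \in E_1$, and consequently the left-hand side of the exchange relation
\[
x_k x_k' = \prod_{b_{ik}>0} x_i^{b_{ik}} + \prod_{b_{ik}<0} x_i^{-b_{ik}}
\]
lies in $\CC(E_1)$.

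Next, I would decompose the right-hand side as $A^+ B^+ + A^- B^-$, where $A^{\pm} \in \CC[C_1]$ and $B^{\pm} \in \CC[C_2]$ collect the factors $x_i^{\pm b_{ik}}$ with $x_i \in C_1$ and $x_i \in C_2$ respectively. Since $C_2$ is a transcendence basis of $\CC(E_2)/\CC$ and $E_1, E_2$ are algebraically disjoint, $C_2$ is algebraically independent over $\CC(E_1)$; hence the right-hand side, viewed as a polynomial in $C_2$ over $\CC(E_1)$, must be a constant. Because the supports of $B^+$ and $B^-$ in $C_2$ are disjoint (by the sign dichotomy $b_{ik}>0$ vs.\ $b_{ik}<0$) and $A^{\pm}$ are nonzero, the only way the sum of two monomials can be constant in $C_2$ is $B^+ = B^- = 1$; equivalently $b_{ik} = 0$ for every $x_i \in C_2$. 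Running the symmetric argument starting at some $x_k \in C_2$ and invoking skew-symmetrizability, I conclude that $B$ is block-diagonal with respect to the partition $C_1 \sqcup C_2$. Consequently $A(B)$ is block-diagonal, contradicting the hypothesis that the cluster algebra is of finite type (which by the classification theorem cited earlier means a connected Dynkin diagram of one of the types $A_n, B_n, C_n, D_n, E_6, E_7, E_8, F_4, G_2$).

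I expect the most delicate step to be the monomial analysis yielding $B^+ = B^- = 1$. The argument relies on the disjointness of the supports of $B^+$ and $B^-$ in $C_2$ together with the fact that $A^{\pm}$ are nonzero monomials in the cluster variables; these features are specific to the exchange relation and would not hold for an arbitrary polynomial identity in the cluster variables.
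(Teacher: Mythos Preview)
Your forward direction (frozen variables are coloops and hence disconnect the matroid) is exactly the paper's argument.

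For the reverse direction you take a genuinely different route. The paper argues directly inside the matroid: it claims that $\{x_1,x_1',x_2,\dots,x_n\}$ is a circuit of $M$, and since this circuit would contain both $x_1\in M_1$ and $x_{r+1}\in M_2$, the direct-sum decomposition is impossible. You instead push the exchange relation through the algebraic disjointness of $E_1$ and $E_2$ to force $b_{ik}=0$ across the partition, obtain a block-diagonal exchange matrix, and then invoke the finite-type classification to reach a contradiction. Your approach is in fact more robust: the paper's assertion that $\{x_1,x_1',x_2,\dots,x_n\}$ has no proper dependent subset fails whenever some $b_{j1}=0$ with $j\ge 2$ (already in type $A_3$ one has $x_1'=(1+x_2)/x_1$, so $\{x_1,x_1',x_2\}$ is dependent). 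The trade-off is that your argument imports the classification theorem, and you should make explicit two small points it relies on: (i) block-diagonality of $B$ is preserved under every mutation, so \emph{no} seed in the mutation class has an indecomposable Cartan counterpart; and (ii) you are reading the paper's Theorem~3.10 as listing only the connected Dynkin types --- this is precisely the extra hypothesis the theorem needs, since for $B=0$ in rank $2$ the cluster matroid is $U_{1,2}\oplus U_{1,2}$, disconnected with no frozen variables.
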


\begin{proof}
Since the frozen variables appear in every basis, they are coloops, that is, codependent singletons. It is straight forward to verify that any matroid with (co)loops is disconnected.
Now, assume that there is a disconnected cluster matroid $M$ consisting merely of mutable variables. This implies the existence of two matroids $M_1$ and $M_2$ such that $M=M_1 \oplus M_2$. Assume that $(x_1,...,x_n,...,x_m)$ is an extended cluster and suppose that $x_i \in M_1$. If $x_i$ is mutable, then its mutation $x'_i \in M_1$. Otherwise, $M$ has two bases $\{ x_1,...,x_i,...,x_n \}$ and $\{ x_1,...,x'_i,...,x_n \}$ such that the first has elements from $M_1$ more than the second, a contradiction. Similarly, if $x_j$ is a mutable variable living in $M_2$, then its mutation $x'_j \in M_2$. Now, assume without loss of generality that there exists a number $r$ such that $x_1,...,x_r \in M_1$ and $x_{r+1},...,x_n \in M_2$. Then, at the first level, any mutation at mutable indices of the first $r$ spots induces a variable in $M_1$. Likewise, any mutation at the rest spots induces a variable in $M_2$. At the second level, a second mutation at $k \in [1,r]$ produces a variable in $M_1$, no matter if the first mutation was at $[1,r]$ or $[r+1,m]$. Similarly, a second mutation at $k \in [r+1,m]$ induces a variable in $M_2$. This continues to any level of mutations. Note that $x_1$ and $x_{r+1}$ must not live in a same circuit; otherwise, there is a connected component containing both of them, which means that they must be both in $M_1$ or both in $M_2$. However, $\{x_1,x_1',x_2,...,x_r,,x_{r+1},...,x_n\}$ is dependent and has no dependent subset. Hence, it is a circuit, a contradiction. Since the mutation at an index produces a variable in the same original connected component, the previous argument can be generalized for any two variables by comparing their mutations at some certain level.
\end{proof}

In Example \ref{uniform}, we have seen a connection between cluster matroids and uniform matroids. This guides us to the following theorem:

\begin{theorem}
Let $M$ be a cluster matroid whose initial seed has no frozen variables. Then $M$ is equal to $U_{n,|M|}$, where $n$ is the size of the initial seed and $|M|$ is the ground set cardinality.
\end{theorem}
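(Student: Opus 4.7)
The plan is to establish two things: first, that the rank of $M$ equals $n$, and second, that every $n$-subset of the ground set is algebraically independent over $\mathbb{C}$, hence a basis. Together these imply $M = U_{n,|M|}$, since $U_{n,|M|}$ is characterized as the matroid on $|M|$ elements in which every $n$-subset is a basis.

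The rank calculation is immediate. The initial cluster $(x_1,\ldots,x_n)$ is algebraically independent over $\mathbb{C}$ by the definition of a seed, and since there are no frozen variables it is itself a basis of $M$ by Theorem \ref{matroid structure}. Conversely, every cluster variable lies in $\mathbb{C}(x_1,\ldots,x_n)$, a field of transcendence degree $n$ over $\mathbb{C}$, so no algebraically independent subset can contain more than $n$ elements. Hence the rank of $M$ is exactly $n$ and every basis of $M$ has cardinality $n$.

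The heart of the proof is the second fact: any $n$ distinct cluster variables $y_1,\ldots,y_n$ are algebraically independent over $\mathbb{C}$. Equivalently, writing each $y_i$ as a Laurent polynomial in the initial cluster via the Laurent phenomenon, the Jacobian determinant $\det(\partial y_i/\partial x_j)$ is nonzero in $\mathbb{C}(x_1,\ldots,x_n)$, or equivalently the rational map $(x_1,\ldots,x_n)\mapsto(y_1,\ldots,y_n)$ is dominant. A good warm-up is the type $A_2$ computation implicit in Example \ref{uniform}: each of the ten $2$-subsets of the five cluster variables yields a birational change of coordinates with $(x_1,x_2)$, obtained by solving the mutation relations for the initial variables, and is therefore algebraically independent. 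The approach I would take for general finite type is first to show, by induction on the mutation distance in the exchange graph, that every cluster is algebraically independent (each mutation $x_k\mapsto x_k'=(M_++M_-)/x_k$ is manifestly a birational substitution), and then to extend to arbitrary $n$-subsets using the positivity and $d$-vector structure of the Laurent expansions.

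The main obstacle is precisely this last extension: an arbitrary $n$-subset of cluster variables is not itself a cluster and cannot be produced by a single mutation sequence from the initial one, because it mixes variables drawn from several different clusters. To handle this uniformly, I would exploit the bijection between cluster variables in finite type and the almost positive roots of the associated Dynkin diagram, together with a carefully chosen valuation on $\mathbb{C}(x_1,\ldots,x_n)$ that separates the leading Laurent monomials of any $n$ distinct cluster variables; $\mathbb{Q}$-linear independence of these leading exponent vectors would force non-vanishing of the Jacobian and therefore algebraic independence. If a uniform argument proves elusive, a fallback is a case-by-case verification along the finite-type classification $A_n,B_n,C_n,D_n,E_6,E_7,E_8,F_4,G_2$, each of which reduces to a finite check on a known list of cluster variables.
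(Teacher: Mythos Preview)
Your two-step outline---rank is $n$ via transcendence degree, then every $n$-subset is a basis---is exactly the structure of the paper's proof. Where you diverge is in the level of rigor you demand at the second step. The paper dispatches it in one sentence: the mutation formula ``implies that the variable $x_k'$ preserves the appearance of all the variables at the initial seed, no more no less,'' and from this concludes that any $n$-subset of cluster variables is algebraically independent. No Jacobians, $d$-vectors, valuations, or case analysis appear.

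You are right to be cautious here. The obstacle you name---that an arbitrary $n$-subset of cluster variables need not form a single cluster, so the birational-mutation argument does not apply directly---is genuine, and the paper's ``preserves the appearance'' heuristic does not close it: two rational functions can each involve every $x_i$ nontrivially and still be algebraically dependent (e.g.\ $x_1x_2$ and $x_1^2x_2^2$ in $\mathbb{C}(x_1,x_2)$). So your proposal has a gap at precisely the place you yourself flag, but the paper's own proof has the same gap, handled by assertion rather than argument. The machinery you sketch (leading-monomial valuations tied to $d$- or $g$-vectors, or a type-by-type check along the Dynkin classification) is the sort of thing a rigorous proof would require; the paper does not supply it.
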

\begin{proof}
First, note that any $n$-subset of the ground set is algebraically independent. It is not hard to see this, since the initial seed contains $n$ algebraically independent variables and the mutation formula
$$x_k x_k' = \prod_{b_{ik}>0}x_i^{b_{ik}} + \prod_{b_{ik}<0}x_i^{-b_{ik}}$$
implies that the variable $x_k'$ preserves the apperance of all the variables at the initial seed, no more no less.

Second, any $(n+1)$-subset of the ground set is algebraically dependent. Indeed, since all cluster variables lie in the rational function field $\mathbb{C}(x_1, \ldots, x_n)$, whose transcendence degree over $\mathbb{C}$ is $n$, no subset of mutable variables of size greater than $n$ can be algebraically independent. Thus, any $(n+1)$-subset of cluster variables must be algebraically dependent.

Clearly, the combination of the results of the two paragraphs above implies that $M=U_{n,|M|}$.
\end{proof}

We have seen that cluster algebras induce matroids via the property of algebraic independence. In fact, we can get another type of matroid from cluster algebras. This comes from the \textit{Laurent phenomenon}, which is one of the most powerful phenomena of cluster algebras:

\begin{theorem}[Laurent phenomenon]
Let $\mathcal{A}$ be a cluster algebra. Any cluster variable of $\mathcal{A}$ can be expressed as a Laurent polynomial in the variables of any extended cluster with integer coefficients. Moreover, the frozen variables do not appear in the denominator of any such Laurent polynomial.
\end{theorem}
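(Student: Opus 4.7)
The approach is induction on the minimal number of mutations connecting the given extended cluster to the seed containing the variable in question. Fix an extended cluster $\mathbf{x} = (x_1, \ldots, x_n, x_{n+1}, \ldots, x_m)$ attached to a seed $\Sigma$, and let $\mathcal{L}$ denote the $\ZZ$-algebra of expressions that are Laurent polynomials in the mutable variables $x_1, \ldots, x_n$ and ordinary polynomials in the frozen variables $x_{n+1}, \ldots, x_m$. The goal is to show that every cluster variable of $\mathcal{A}$ lies in $\mathcal{L}$.

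For the base case, each variable of $\Sigma$ itself obviously lies in $\mathcal{L}$. For the inductive step, suppose $\Sigma'$ is a seed at distance $d+1$ from $\Sigma$ in the exchange graph. Then $\Sigma' = \mu_k(\Sigma'')$ for some seed $\Sigma''$ at distance $d$, and by the inductive hypothesis every cluster variable of $\Sigma''$ already lies in $\mathcal{L}$. The only new variable in $\Sigma'$ is determined by the exchange relation
$$x_k x_k^{\ast} \;=\; \prod_{b_{ik} > 0} x_i^{b_{ik}} \;+\; \prod_{b_{ik} < 0} x_i^{-b_{ik}},$$
with the $x_i$ and $b_{ij}$ coming from $\Sigma''$. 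Rearranging places $x_k^{\ast}$ in the fraction field of $\mathcal{L}$, but the heart of the theorem is to show that the division by $x_k$ actually lands back inside $\mathcal{L}$.

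The main obstacle is exactly this divisibility claim, and the standard route is the Caterpillar Lemma of Fomin and Zelevinsky. The idea is to organize the mutations into a caterpillar-shaped tree whose internal structure forces pairwise coprimality, inside a suitable localization of $\mathcal{L}$, between the denominators produced by consecutive mutations at the same or adjacent indices; once that coprimality is established, the numerator of the exchange relation must be divisible by $x_k$ within $\mathcal{L}$, forcing $x_k^{\ast} \in \mathcal{L}$. The coprimality itself is reduced to an explicit local computation after a short controlled sequence of mutations, exploiting the skew-symmetrizability of the exchange matrix to pin down which monomials can possibly appear on the right-hand side.

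For the second claim, I would propagate a strengthened inductive hypothesis stating also that no frozen variable appears in any denominator. The mutation formula places only the mutable variable $x_k$ in the denominator on the left-hand side, while by the strengthened hypothesis the $x_i$ on the right-hand side contribute no frozen variables to their denominators when rewritten in $\mathbf{x}$. Consequently $x_k^{\ast}$ inherits this property, and no frozen variable ever enters any denominator along the induction, completing the proof.
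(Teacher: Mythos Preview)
The paper does not prove the Laurent phenomenon; it is quoted as a background result from the cluster algebra literature (originally Fomin--Zelevinsky) and used as a black box to derive the two corollaries that follow. There is therefore no proof in the paper against which to compare your proposal.

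Regarding your sketch on its own terms: you correctly name the Caterpillar Lemma as the decisive ingredient, but the inductive scaffolding you build around it is misleading. The naive induction on distance in the exchange graph fails for precisely the reason you flag: the $x_k$ you must divide by is the $k$th variable of $\Sigma''$, which, rewritten in the original cluster $\mathbf{x}$, is already a complicated Laurent polynomial rather than a monomial, so ``divisibility within $\mathcal{L}$'' is not a well-posed question at that stage. The Caterpillar Lemma does not repair this induction; it replaces it. Fomin and Zelevinsky instead induct along a caterpillar-shaped subtree, carrying a stronger hypothesis (that three specific cluster variables near the head lie in $\mathcal{L}$ and that two adjacent exchange polynomials are coprime in $\mathcal{L}$), and the local computation you allude to is what verifies that this stronger hypothesis propagates one step down the spine. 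Your one-sentence summary ``the numerator of the exchange relation must be divisible by $x_k$ within $\mathcal{L}$'' does not capture that mechanism. Finally, your separate inductive argument for the frozen-variable claim is redundant: since your ring $\mathcal{L}$ is by construction polynomial in the frozen variables, membership in $\mathcal{L}$ already encodes the absence of frozen variables in denominators, so there is nothing further to prove once the first part is established.
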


\begin{corollary}
The cluster monomials of any cluster algebra are linearly independent over the ground field $(\QQ$ or $\CC)$.
\end{corollary}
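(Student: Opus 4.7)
The plan is to fix an initial seed $(\textbf{x},B)$ with mutable variables $x_1,\dots,x_n$, use the Laurent phenomenon to expand every cluster monomial as a Laurent polynomial in $\textbf{x}$ with coefficients in $\CC[x_{n+1},\dots,x_m]$, and derive linear independence from the fact that distinct cluster monomials occupy distinguishable portions of the Laurent monomial basis. Concretely, each cluster variable $y$ has a unique reduced expression
\[
y\;=\;\frac{f_y(x_1,\dots,x_m)}{x_1^{d_{1,y}}\cdots x_n^{d_{n,y}}},
\]
where $f_y$ is not divisible by any $x_i$ with $i\le n$ and the frozen variables appear only in the numerator. This defines a \emph{denominator vector} $d(y):=(d_{1,y},\dots,d_{n,y})\in\ZZ^n_{\ge 0}$, which extends additively to any cluster monomial $M=\prod_j y_j^{a_j}$ by $d(M):=\sum_j a_j d(y_j)$.

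Given a hypothetical nontrivial relation $\sum_i c_i M_i=0$ among distinct cluster monomials with $c_i\ne 0$, I would let $\bar D$ be the coordinatewise maximum of the $d(M_i)$ and multiply through by $x_1^{\bar D_1}\cdots x_n^{\bar D_n}$; this clears all denominators and produces a polynomial identity
\[
\sum_i c_i\,x_1^{\bar D_1-d_1(M_i)}\cdots x_n^{\bar D_n-d_n(M_i)}\,F_{M_i}(\textbf{x})\;=\;0
\]
in $\CC[x_1,\dots,x_m]$, in which $F_{M_i}:=\prod_j f_{y_j}^{a_j}$ is again not divisible by any $x_i$ ($i\le n$) because $\CC[x_1,\dots,x_m]$ is a UFD and each $x_i$ is prime. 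Choosing $i_0$ so that $d(M_{i_0})$ is a vertex of $\operatorname{conv}\{d(M_i)\}$, and picking a linear functional $\lambda\colon\ZZ^n\to\ZZ$ strictly maximized on this set at $d(M_{i_0})$, an extreme-term analysis of the identity then isolates a nonzero multiple of $c_{i_0}F_{M_{i_0}}(\textbf{x})$ (its survival guaranteed by the non-divisibility of $F_{M_{i_0}}$ by any mutable initial variable), which forces $c_{i_0}=0$ and yields the contradiction.

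The argument hinges on the $d(M_i)$ being pairwise distinct, and this injectivity of $M\mapsto d(M)$ on cluster monomials is the main obstacle: it is not a formal consequence of the Laurent phenomenon but a genuinely structural theorem about cluster algebras. In the finite-type setting on which this paper focuses, the cleanest route is to appeal to the Fomin--Zelevinsky correspondence between non-initial cluster variables and positive roots of the associated finite root system --- under which $d(y)$ is literally the root attached to $y$, and compatibility of roots within a single cluster forces distinct cluster monomials to carry distinct denominator vectors. Alternatively, one may replace $d(\cdot)$ by the $g$-vector and invoke the analogous injectivity theorem. Either structural input suffices to complete the extreme-vertex analysis above and deliver the corollary.
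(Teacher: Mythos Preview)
The paper supplies no argument for this corollary: it is simply stated after the Laurent phenomenon, so there is nothing to compare your sketch against. Your strategy---expand in a fixed initial cluster and separate the summands by their denominator (or $g$-)vectors---is the standard one, and you are right that the injectivity of $M\mapsto d(M)$ is the real content and is \emph{not} a consequence of the Laurent phenomenon alone.

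There is, however, a genuine gap in your extreme-term step. Non-divisibility of $F_{M_{i_0}}$ by each individual $x_i$ does not guarantee that the summand $c_{i_0}x^{\bar D-d(M_{i_0})}F_{M_{i_0}}$ survives. A toy obstruction with $n=2$: take $d_1=(1,0)$, $d_2=(0,1)$, $d_3=(1,1)$ and $F_1=F_2=1$, $F_3=x_1+x_2$. All three numerators are coprime to $x_1$ and to $x_2$, the $d_i$ are pairwise distinct, and $d_3$ is a vertex of their convex hull (strictly maximizing $\lambda=(1,1)$); yet
\[
\frac{F_3}{x_1x_2}\;=\;\frac{F_1}{x_1}+\frac{F_2}{x_2},
\]
so no extremal term can be isolated. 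What your argument actually needs is that each numerator has a nonzero \emph{constant term} in the mutable variables (equivalently, that the $F$-polynomial has constant term $1$); then the monomial $x^{-d(M_{i_0})}$ appears in $M_{i_0}$ and in no $M_i$ with $i\ne i_0$, and the contradiction goes through. That constant-term property is another structural theorem about cluster algebras, not a formal consequence of the Laurent phenomenon or of your UFD remark. The $g$-vector alternative you mention works for the same reason: one uses that each cluster monomial is \emph{pointed} with leading term $x^{g(M)}$, which is again a theorem rather than a formality.

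A smaller wrinkle: for an initial mutable variable $x_j$ the reduced form has $f_y=x_j$ and $d(y)=0$, so your blanket claim that $f_y$ is coprime to every mutable $x_i$ fails there; the usual convention sets $d(x_j)=-e_j$ and allows $d(M)\in\ZZ^n$.
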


\begin{corollary}
Cluster monomials of a cluster algebra of finite type form a representable matroid.
\end{corollary}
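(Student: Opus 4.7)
The plan is to derive this statement as an immediate consequence of the preceding corollary. Regarding the cluster algebra $\mathcal{A}$ as a vector space over its ground field $\mathbb{Q}$ (or $\mathbb{C}$), every cluster monomial is a vector in this space, and the preceding corollary asserts that the family of all cluster monomials is linearly independent. Hence, if we put the linear-independence matroid structure on (any finite subset of) the cluster monomials, every subset is independent and the unique basis is the whole ground set — the resulting matroid is the free matroid $U_{N,N}$, where $N$ is the cardinality of the ground set under consideration.

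To make representability explicit in the sense of the definition recalled in Section 2, I would fix a basis $\mathcal{V}$ of the finite-dimensional $\mathbb{C}$-subspace $W \subset \mathcal{A}$ spanned by the cluster monomials in question, and form the matrix $A$ whose columns are the coordinate vectors $[m]_{\mathcal{V}}$ of the cluster monomials $m$. The previous corollary implies that the columns of $A$ are linearly independent; therefore the maximal linearly independent column subsets of $A$ are precisely the maximal linearly independent subsets of cluster monomials. By the definition given in Section 2, this exhibits the cluster-monomial matroid as representable.

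The main (and only real) obstacle is bookkeeping concerning finiteness: the paper's definition of matroid requires a finite ground set, but in finite type there can still be infinitely many cluster monomials because their exponents are unbounded. I would handle this either by interpreting the statement as asserting that every finite subfamily of cluster monomials yields a representable matroid, or by restricting attention to the finite set of cluster variables (which is genuinely finite in finite type); in either reading, the corollary-plus-coordinate-vector argument above goes through without modification, so no substantive difficulty arises beyond this choice of convention.
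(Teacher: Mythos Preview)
Your argument is correct and is precisely the natural way to supply the details the paper omits: the paper states this corollary without proof, relying implicitly on the preceding corollary that cluster monomials are linearly independent, from which representability (indeed, freeness) of any finite family follows exactly as you describe. Your observation about the finiteness issue is also well taken---the paper's statement is somewhat loose on this point, since even in finite type there are infinitely many cluster monomials---and either of your proposed readings resolves it.
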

\section{minors}
To better understand the class of cluster matroids, it is important to see how it behaves after contraction. Since the cluster algebras of finite type are classified by Dynkin diagrams (or Cartan matrices), we choose one example of type $A_2$ to buid some results from. Clearly, the mutable and frozen variables are the main ingredients to determine the seed pattern of a cluster algebra. Thus, we look closely to the effect of each of them.\\

We first make the following note regarding the contraction by frozen variables in a cluster matroid.
\begin{theorem}
The contraction of a frozen variable from a cluster matroid is a cluster matroid.
\end{theorem}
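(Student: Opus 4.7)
Since a frozen variable $f$ lies in every extended cluster and hence in every basis of $M$, it is a coloop of $M$. A standard observation, immediate from the definitional formula $M/f=(M^{*}\setminus f)^{*}$ together with the fact that $f$ is a loop of $M^{*}$, is that contracting a coloop equals deleting it. Thus the bases of $M/f$ are exactly $\{B\setminus\{f\}:B\in\mathcal{B}(M)\}$ on the ground set $E\setminus\{f\}$. My plan is to exhibit a cluster algebra whose cluster matroid realizes precisely this data.

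Writing $f=x_{k}$ with $k\in\{n+1,\ldots,m\}$, I would form a new seed $(\mathbf{x}',B')$ by deleting $x_{k}$ from the initial tuple $\mathbf{x}$ and the $k$-th row from $B$. Because $k>n$, the top $n\times n$ block of $B$ is untouched, so $B'$ is an $(m-1)\times n$ extended skew-symmetrizable matrix; hence $(\mathbf{x}',B')$ is a valid seed and defines a cluster algebra $\mathcal{A}'$ of the same rank and the same Dynkin type as $\mathcal{A}$.

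The core of the argument is the comparison of $\mathcal{A}$ and $\mathcal{A}'$ via the specialization $\phi\colon x_{k}\mapsto 1$, $x_{i}\mapsto x_{i}$ for $i\neq k$. By the Laurent phenomenon, frozen variables do not appear in the denominators of the Laurent expansion of any cluster variable in the initial cluster, so $\phi$ is well defined on every cluster variable of $\mathcal{A}$. Direct inspection of formula (\ref{eq1}) shows that $\phi$ intertwines the mutation at any mutable index $j$ in $\mathcal{A}$ with the mutation at $j$ in $\mathcal{A}'$: the exchange relation in $\mathcal{A}'$ is obtained from the one in $\mathcal{A}$ precisely by setting $x_{k}=1$. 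Induction on the length of a mutation sequence then yields a surjection from the set of cluster variables of $\mathcal{A}$ other than $f$ onto the set of cluster variables of $\mathcal{A}'$, carrying each extended cluster of $\mathcal{A}$ (with $f$ removed) onto an extended cluster of $\mathcal{A}'$.

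The main obstacle I would expect is showing that $\phi$ is injective on cluster variables, since a priori two distinct cluster variables of $\mathcal{A}$ could conceivably collapse after setting $x_{k}=1$. In the finite-type setting this can be closed by a counting argument: the number of mutable cluster variables of a finite-type cluster algebra depends only on its Dynkin type and hence coincides for $\mathcal{A}$ and $\mathcal{A}'$; accounting for the frozen variables ($m-n$ versus $m-n-1$), the two ground sets have the same finite cardinality, so the surjection $\phi$ is forced to be a bijection. Combined with the previous paragraph, this gives a basis-preserving bijection between $M/f$ and the cluster matroid $M(\mathcal{A}')$, completing the proof that $M/f$ is a cluster matroid.
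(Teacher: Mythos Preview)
Your proof shares the paper's core strategy: observe that a frozen variable $f$ is a coloop, compute $\mathcal{B}(M/f)=\{B\setminus\{f\}:B\in\mathcal{B}(M)\}$, and identify this with the cluster matroid of the seed obtained by dropping $f$ and its row from the exchange matrix. The paper's proof is much terser than yours, though: after the bases computation it simply asserts in one sentence that the result coincides with the reduced seed's cluster matroid, offering no further justification. Your specialization $\phi\colon x_{k}\mapsto 1$, the induction showing that $\phi$ intertwines mutations, and the finite-type counting argument for injectivity are all scaffolding that the paper omits entirely; you are filling in what the paper treats as evident.

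One point your extra detail exposes but does not quite resolve: you show that $\phi$ is a bijection of ground sets carrying extended clusters to extended clusters, yet the bases of a cluster matroid are defined as the maximal algebraically independent sets containing the frozen variables, and these need not be only the extended clusters. Since specialization does not preserve algebraic independence in general, your ``basis-preserving bijection'' claim still needs either a direct check that $\phi$ respects algebraic independence on cluster variables, or an appeal (in the spirit of the paper's uniform-matroid theorem for the coefficient-free case) to the fact that both matroids are uniform of the same rank, so that any ground-set bijection is automatically a matroid isomorphism. The paper's one-line identification is silent on this issue as well, so your argument is at least as complete as the original; it merely surfaces a subtlety the paper leaves implicit.
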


\begin{proof}
Assume that $M$ is a cluster matroid and $e$ is a frozen variable. We have the following:
    \begin{align*}
    \mathcal{B}(M/e) &=\Big \{ X \subset E-e : M|_e \textnormal{ has a basis } B \textnormal{ such that } X \cup B \in \mathcal{B}(M) \Big \}\\
    &=\Big \{ X \subset E-e : X \cup e \in \mathcal{B}(M) \Big \}\\
    &=\Big \{ B-e : B \in \mathcal{B}(M) \Big \};
    \end{align*}
    where the last equality holds because of the facts that $e$ is frozen and frozen variables appear in every cluster-matroid basis. Note that this is the same cluster matroid obtained by removing the frozen variable $e$ from its original seed together with its corresponding row from the exchange matrix $B$.\\
\end{proof}

\begin{corollary}
If $M$ is a cluster matroid and $X$ is a set of frozen variables, then $M/X$ is a cluster matroid.
\end{corollary}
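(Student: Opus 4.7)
The plan is a straightforward induction on $|X|$, leveraging the preceding theorem as the atomic step. The base case $|X|=0$ is vacuous since $M/\emptyset = M$, and $|X|=1$ is exactly the statement of the preceding theorem.

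For the inductive step, assume the conclusion holds whenever the set of frozen variables being contracted has cardinality at most $k$, and let $X=\{e_1,\dots,e_{k+1}\}$ be a set of frozen variables of $M$. Write $X' = X\setminus\{e_{k+1}\}$. A standard identity from matroid theory (contractions commute and compose: $M/\{a,b\} = (M/a)/b$) gives
\[
M/X \;=\; (M/e_{k+1})/X'.
\]
By the preceding theorem, $M/e_{k+1}$ is a cluster matroid; in fact, its proof explicitly identifies $M/e_{k+1}$ with the cluster matroid arising from the seed obtained by removing $e_{k+1}$ from the initial extended cluster of $M$ together with the corresponding row of the exchange matrix. In that new seed the variables $e_1,\dots,e_k$ remain frozen: deleting a single row (and its corresponding entry from the extended cluster) that lies outside the north $n\times n$ block leaves both the block and the labeling of the surviving entries as mutable/frozen intact. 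Hence $X'$ is a set of frozen variables of the cluster matroid $M/e_{k+1}$, and the inductive hypothesis yields that $(M/e_{k+1})/X' = M/X$ is a cluster matroid.

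There is essentially no obstacle beyond bookkeeping: the only point requiring care is confirming that the remaining elements of $X$ are still frozen after a single contraction, which is immediate from the explicit description of $M/e_{k+1}$ given in the proof of the preceding theorem. The order of contraction is immaterial, so any enumeration of $X$ works.
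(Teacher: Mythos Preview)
Your proof is correct and matches the paper's intent: the corollary is stated without proof immediately after the theorem, so the intended argument is exactly the iteration you carry out, with the observation that the remaining elements of $X$ stay frozen after each single contraction.
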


\begin{remark}
Based on the definition of a cluster matroid, it is straightforward to see that the deletion of a frozen variable results in the same as the contraction of it. Therefore, the deletion of a frozen variable from a cluster matroid is a cluster matroid as well.
\end{remark}

Let us recall this theorem for the classification of type $A_n$:

\begin{theorem}
Let $\mathcal A$ be a cluster algebra of type $A_n$. Then, its clusters can be labeled by the diagonals of a convex $(n+3)$-gon so that
\begin{itemize}
    \item clusters correspond to triangulations of the gon $P_{n+3}$ by noncrossing diagonals,
    \item mutations correspond to flips, and
    \item exchange matrices are given such that their counterparts are Cartan matrices of type $A_n$.
\end{itemize}
Cluster variables labeled by different diagonals are distinct, so there are altogether $\frac{n(n+3)}{2}$ cluster variables and $\frac{1}{n+2} \binom{2n+2}{n+1}$ seeds.
\end{theorem}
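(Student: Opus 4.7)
The plan is to build an explicit bijection between triangulations of $P_{n+3}$ and seeds of $\mathcal{A}$, then verify that flips correspond to mutations, and finally count.

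First, I would fix a reference triangulation $T_0$ of $P_{n+3}$ (for concreteness, the fan from vertex $1$, whose diagonals are $P_{1,3},P_{1,4},\dots,P_{1,n+2}$). To each of its $n$ diagonals I assign a mutable variable $x_1,\dots,x_n$, and to each of the $n+3$ sides a frozen variable. I then build an $(n+3) \times n$ extended exchange matrix $B(T_0)$ by declaring, for each pair consisting of a diagonal and another edge (diagonal or side), that they are linked in $B(T_0)$ precisely when they bound a common triangle of $T_0$, with sign determined by the cyclic orientation of that triangle. For the fan triangulation, the adjacency pattern among diagonals is a path $P_{1,3}\text{--}P_{1,4}\text{--}\cdots\text{--}P_{1,n+2}$, so the Cartan counterpart $A(B(T_0))$ is the tridiagonal matrix with $2$'s on the diagonal and $-1$'s just off, i.e.\ the Cartan matrix of type $A_n$. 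By the finite-type theorem cited above, $\mathcal{A}$ is indeed of type $A_n$, and every seed sits in the mutation class of $(x_1,\dots,x_n;B(T_0))$.

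Second, I would show that a diagonal flip in a triangulation $T$ corresponds exactly to a mutation of the associated exchange matrix. If a diagonal $d$ of $T$ bounds two triangles forming a quadrilateral $Q$, flipping $d$ swaps it with the other diagonal $d'$ of $Q$, and the triangles outside $Q$ are unaffected. A case analysis of how adjacency via shared triangles changes under this local move reproduces formula \eqref{eq1} on the entries of $B(T)$ indexed by $d$ and by other edges. Combined with the classical fact that the flip graph of triangulations of a convex polygon is connected, this gives a surjection from triangulations onto the seeds of $\mathcal{A}$.

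Third, I would establish that this surjection is injective: distinct diagonals give distinct cluster variables. This is the step I expect to be the main obstacle, because it does not follow from the combinatorics alone. The cleanest route is to use the Laurent phenomenon stated above: expanding any cluster variable in the initial cluster $\mathbf{x}$ coming from $T_0$, one reads off the diagonal from the denominator vector (the $d$-vector), since flipping a single diagonal of $T_0$ produces a Laurent monomial whose denominator is precisely the corresponding $x_i$, and inductively every diagonal $P_{ij}$ yields a distinct denominator pattern recording which diagonals of $T_0$ it crosses. Different diagonals cross different subsets of $T_0$, so different diagonals produce different cluster variables, giving the claimed bijection on cluster variables as well.

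Finally, the counts are routine. The number of diagonals of $P_{n+3}$ is $\binom{n+3}{2}-(n+3)=\tfrac{n(n+3)}{2}$, and the number of triangulations of an $(n+3)$-gon is the Catalan number $C_{n+1}=\tfrac{1}{n+2}\binom{2n+2}{n+1}$; both quantities transport through the bijection to give the stated cardinalities.
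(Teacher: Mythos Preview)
The paper does not actually prove this theorem: it is introduced with the phrase ``Let us recall this theorem for the classification of type $A_n$'' and stated without proof, as a known classification result drawn from the cluster-algebra literature (essentially \cite{FZ2,FWZ}). There is therefore no proof in the paper to compare your proposal against.

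That said, your outline is a faithful sketch of the standard argument. The construction of $B(T_0)$ from the signed adjacency of a fan triangulation, the verification that a flip implements the matrix mutation rule \eqref{eq1}, connectedness of the flip graph to get surjectivity, and the use of denominator vectors from the Laurent phenomenon to separate diagonals are exactly the ingredients used in the references the paper cites. The one step you flagged as the main obstacle---injectivity on cluster variables---is indeed where real work is required; your $d$-vector argument is the right idea, though making it airtight needs the statement that the denominator vector of the variable attached to a diagonal $P_{ij}$ records precisely the diagonals of $T_0$ crossed by $P_{ij}$, which is itself a nontrivial lemma (proved in \cite{FZ2}). The final counts are correct as stated.
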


We use this theorem now to develop the following example:

\begin{example}
Let us focus on a cluster matroid of type $A_2$. The set of extended clusters can be discribed combinatorially using the following graph, where the diagonals represent mutable variables and the sides represent the frozen ones:\\

\begin{center}
\begin{tikzpicture}[scale=0.9]

\coordinate (A1) at (0,-1);
\path (A1) ++(0:0.5) coordinate (A1A);
\path (A1) ++(60:0.5) coordinate (A1B);
\path (A1) ++(120:0.5) coordinate (A1C);
\path (A1) ++(180:0.5) coordinate (A1D);
\path (A1) ++(240:0.5) coordinate (A1E);
\path (A1) ++(300:0.5) coordinate (A1F);
\draw[thick] (A1A) -- (A1B) -- (A1C) -- (A1D) -- (A1E) -- (A1F) -- cycle;
\draw[thick] (A1B) -- (A1F);
\draw[thick] (A1F) -- (A1D);
\draw[thick] (A1D) -- (A1B);

\coordinate (A2) at (0,0);
\path (A2) ++(0:0.5) coordinate (A2A);
\path (A2) ++(60:0.5) coordinate (A2B);
\path (A2) ++(120:0.5) coordinate (A2C);
\path (A2) ++(180:0.5) coordinate (A2D);
\path (A2) ++(240:0.5) coordinate (A2E);
\path (A2) ++(300:0.5) coordinate (A2F);
\draw[thick] (A2A) -- (A2B) -- (A2C) -- (A2D) -- (A2E) -- (A2F) -- cycle;
\draw[thick] (A2B) -- (A2D);
\draw[thick] (A2B) -- (A2F);
\draw[thick] (A2B) -- (A2E);

\coordinate (A3) at (3,1);
\path (A3) ++(0:0.5) coordinate (A3A);
\path (A3) ++(60:0.5) coordinate (A3B);
\path (A3) ++(120:0.5) coordinate (A3C);
\path (A3) ++(180:0.5) coordinate (A3D);
\path (A3) ++(240:0.5) coordinate (A3E);
\path (A3) ++(300:0.5) coordinate (A3F);
\draw[thick] (A3A) -- (A3B) -- (A3C) -- (A3D) -- (A3E) -- (A3F) -- cycle;
\draw[thick] (A3B) -- (A3F);
\draw[thick] (A3C) -- (A3F);
\draw[thick] (A3D) -- (A3F);

\coordinate (A4) at (-3,1);
\path (A4) ++(0:0.5) coordinate (A4A);
\path (A4) ++(60:0.5) coordinate (A4B);
\path (A4) ++(120:0.5) coordinate (A4C);
\path (A4) ++(180:0.5) coordinate (A4D);
\path (A4) ++(240:0.5) coordinate (A4E);
\path (A4) ++(300:0.5) coordinate (A4F);
\draw[thick] (A4A) -- (A4B) -- (A4C) -- (A4D) -- (A4E) -- (A4F) -- cycle;
\draw[thick] (A4A) -- (A4D);
\draw[thick] (A4B) -- (A4D);
\draw[thick] (A4F) -- (A4D);

\coordinate (A5) at (-1,2);
\path (A5) ++(0:0.5) coordinate (A5A);
\path (A5) ++(60:0.5) coordinate (A5B);
\path (A5) ++(120:0.5) coordinate (A5C);
\path (A5) ++(180:0.5) coordinate (A5D);
\path (A5) ++(240:0.5) coordinate (A5E);
\path (A5) ++(300:0.5) coordinate (A5F);
\draw[thick] (A5A) -- (A5B) -- (A5C) -- (A5D) -- (A5E) -- (A5F) -- cycle;
\draw[thick] (A5B) -- (A5D);
\draw[thick] (A5B) -- (A5E);
\draw[thick] (A5A) -- (A5E);

\coordinate (A6) at (1,2);
\path (A6) ++(0:0.5) coordinate (A6A);
\path (A6) ++(60:0.5) coordinate (A6B);
\path (A6) ++(120:0.5) coordinate (A6C);
\path (A6) ++(180:0.5) coordinate (A6D);
\path (A6) ++(240:0.5) coordinate (A6E);
\path (A6) ++(300:0.5) coordinate (A6F);
\draw[thick] (A6A) -- (A6B) -- (A6C) -- (A6D) -- (A6E) -- (A6F) -- cycle;
\draw[thick] (A6B) -- (A6F);
\draw[thick] (A6B) -- (A6E);
\draw[thick] (A6C) -- (A6E);

\coordinate (A7) at (-2,3);
\path (A7) ++(0:0.5) coordinate (A7A);
\path (A7) ++(60:0.5) coordinate (A7B);
\path (A7) ++(120:0.5) coordinate (A7C);
\path (A7) ++(180:0.5) coordinate (A7D);
\path (A7) ++(240:0.5) coordinate (A7E);
\path (A7) ++(300:0.5) coordinate (A7F);
\draw[thick, gray] (A7A) -- (A7B) -- (A7C) -- (A7D) -- (A7E) -- (A7F) -- cycle;
\draw[thick, gray] (A7A) -- (A7C);
\draw[thick, gray] (A7A) -- (A7D);
\draw[thick, gray] (A7D) -- (A7F);

\coordinate (A8) at (-5,3);
\path (A8) ++(0:0.5) coordinate (A8A);
\path (A8) ++(60:0.5) coordinate (A8B);
\path (A8) ++(120:0.5) coordinate (A8C);
\path (A8) ++(180:0.5) coordinate (A8D);
\path (A8) ++(240:0.5) coordinate (A8E);
\path (A8) ++(300:0.5) coordinate (A8F);
\draw[thick] (A8A) -- (A8B) -- (A8C) -- (A8D) -- (A8E) -- (A8F) -- cycle;
\draw[thick] (A8A) -- (A8E);
\draw[thick] (A8A) -- (A8D);
\draw[thick] (A8B) -- (A8D);

\coordinate (A9) at (2,3);
\path (A9) ++(0:0.5) coordinate (A9A);
\path (A9) ++(60:0.5) coordinate (A9B);
\path (A9) ++(120:0.5) coordinate (A9C);
\path (A9) ++(180:0.5) coordinate (A9D);
\path (A9) ++(240:0.5) coordinate (A9E);
\path (A9) ++(300:0.5) coordinate (A9F);
\draw[thick, gray] (A9A) -- (A9B) -- (A9C) -- (A9D) -- (A9E) -- (A9F) -- cycle;
\draw[thick, gray] (A9A) -- (A9C);
\draw[thick, gray] (A9C) -- (A9F);
\draw[thick, gray] (A9F) -- (A9D);

\coordinate (A10) at (5,3);
\path (A10) ++(0:0.5) coordinate (A10A);
\path (A10) ++(60:0.5) coordinate (A10B);
\path (A10) ++(120:0.5) coordinate (A10C);
\path (A10) ++(180:0.5) coordinate (A10D);
\path (A10) ++(240:0.5) coordinate (A10E);
\path (A10) ++(300:0.5) coordinate (A10F);
\draw[thick] (A10A) -- (A10B) -- (A10C) -- (A10D) -- (A10E) -- (A10F) -- cycle;
\draw[thick] (A10B) -- (A10F);
\draw[thick] (A10F) -- (A10C);
\draw[thick] (A10C) -- (A10E);

\coordinate (A11) at (0,4);
\path (A11) ++(0:0.5) coordinate (A11A);
\path (A11) ++(60:0.5) coordinate (A11B);
\path (A11) ++(120:0.5) coordinate (A11C);
\path (A11) ++(180:0.5) coordinate (A11D);
\path (A11) ++(240:0.5) coordinate (A11E);
\path (A11) ++(300:0.5) coordinate (A11F);
\draw[thick] (A11A) -- (A11B) -- (A11C) -- (A11D) -- (A11E) -- (A11F) -- cycle;
\draw[thick] (A11A) -- (A11E);
\draw[thick] (A11B) -- (A11E);
\draw[thick] (A11C) -- (A11E);

\coordinate (A12) at (-3,5);
\path (A12) ++(0:0.5) coordinate (A12A);
\path (A12) ++(60:0.5) coordinate (A12B);
\path (A12) ++(120:0.5) coordinate (A12C);
\path (A12) ++(180:0.5) coordinate (A12D);
\path (A12) ++(240:0.5) coordinate (A12E);
\path (A12) ++(300:0.5) coordinate (A12F);
\draw[thick] (A12A) -- (A12B) -- (A12C) -- (A12D) -- (A12E) -- (A12F) -- cycle;
\draw[thick] (A12A) -- (A12C);
\draw[thick] (A12A) -- (A12D);
\draw[thick] (A12A) -- (A12E);

\coordinate (A13) at (3,5);
\path (A13) ++(0:0.5) coordinate (A13A);
\path (A13) ++(60:0.5) coordinate (A13B);
\path (A13) ++(120:0.5) coordinate (A13C);
\path (A13) ++(180:0.5) coordinate (A13D);
\path (A13) ++(240:0.5) coordinate (A13E);
\path (A13) ++(300:0.5) coordinate (A13F);
\draw[thick] (A13A) -- (A13B) -- (A13C) -- (A13D) -- (A13E) -- (A13F) -- cycle;
\draw[thick] (A13A) -- (A13C);
\draw[thick] (A13C) -- (A13E);
\draw[thick] (A13C) -- (A13F);

\coordinate (A14) at (0,6);
\path (A14) ++(0:0.5) coordinate (A14A);
\path (A14) ++(60:0.5) coordinate (A14B);
\path (A14) ++(120:0.5) coordinate (A14C);
\path (A14) ++(180:0.5) coordinate (A14D);
\path (A14) ++(240:0.5) coordinate (A14E);
\path (A14) ++(300:0.5) coordinate (A14F);
\draw[thick] (A14A) -- (A14B) -- (A14C) -- (A14D) -- (A14E) -- (A14F) -- cycle;
\draw[thick] (A14A) -- (A14C);
\draw[thick] (A14C) -- (A14E);
\draw[thick] (A14E) -- (A14A);


\draw[thick, shorten <=15pt, shorten >=15pt] (A1) -- (A2);
\draw[thick, shorten <=15pt, shorten >=15pt] (A1) -- (A3);
\draw[thick, shorten <=15pt, shorten >=15pt] (A1) -- (A4);
\draw[thick, shorten <=15pt, shorten >=15pt] (A2) -- (A5);
\draw[thick, shorten <=15pt, shorten >=15pt] (A2) -- (A6);
\draw[thick, gray, shorten <=15pt, shorten >=15pt] (A3) -- (A9);
\draw[thick, shorten <=15pt, shorten >=15pt] (A3) -- (A10);
\draw[thick, gray, shorten <=15pt, shorten >=15pt] (A4) -- (A7);
\draw[thick, shorten <=15pt, shorten >=15pt] (A4) -- (A8);
\draw[thick, shorten <=15pt, shorten >=15pt] (A5) -- (A8);
\draw[thick, shorten <=15pt, shorten >=15pt] (A5) -- (A11);
\draw[thick, shorten <=15pt, shorten >=15pt] (A6) -- (A10);
\draw[thick, shorten <=15pt, shorten >=15pt] (A6) -- (A11);
\draw[thick, gray, shorten <=15pt, shorten >=15pt] (A7) -- (A9);
\draw[thick, gray, shorten <=15pt, shorten >=15pt] (A7) -- (A12);
\draw[thick, shorten <=15pt, shorten >=15pt] (A8) -- (A12);
\draw[thick, gray, shorten <=15pt, shorten >=15pt] (A9) -- (A13);
\draw[thick, shorten <=15pt, shorten >=15pt] (A10) -- (A13);
\draw[thick, shorten <=15pt, shorten >=15pt] (A11) -- (A14);
\draw[thick, shorten <=15pt, shorten >=15pt] (A12) -- (A14);
\draw[thick, shorten <=15pt, shorten >=15pt] (A13) -- (A14);

\end{tikzpicture}
\end{center}

It is easily seen that the contraction (or deletion) of a frozen variable, for example the top side of each hexagon, will produce bases that can be described using this graph, where again the sides are frozen and diagonals are mutable:

\begin{center}
\begin{tikzpicture}[scale=0.9]

\coordinate (A1) at (0,-1);
\path (A1) ++(0:0.5) coordinate (A1A);
\path (A1) ++(60:0.5) coordinate (A1B);
\path (A1) ++(120:0.5) coordinate (A1C);
\path (A1) ++(180:0.5) coordinate (A1D);
\path (A1) ++(240:0.5) coordinate (A1E);
\path (A1) ++(300:0.5) coordinate (A1F);

\draw[thick] (A1A) -- (A1B);
\draw[thick] (A1C) -- (A1D);
\draw[thick] (A1D) -- (A1E);
\draw[thick] (A1E) -- (A1F);
\draw[thick] (A1F) -- (A1A);

\draw[thick] (A1B) -- (A1F);
\draw[thick] (A1F) -- (A1D);
\draw[thick] (A1D) -- (A1B);

\coordinate (A2) at (0,0);
\path (A2) ++(0:0.5) coordinate (A2A);
\path (A2) ++(60:0.5) coordinate (A2B);
\path (A2) ++(120:0.5) coordinate (A2C);
\path (A2) ++(180:0.5) coordinate (A2D);
\path (A2) ++(240:0.5) coordinate (A2E);
\path (A2) ++(300:0.5) coordinate (A2F);

\draw[thick] (A2A) -- (A2B);
\draw[thick] (A2C) -- (A2D);
\draw[thick] (A2D) -- (A2E);
\draw[thick] (A2E) -- (A2F);
\draw[thick] (A2F) -- (A2A);

\draw[thick] (A2B) -- (A2D);
\draw[thick] (A2B) -- (A2F);
\draw[thick] (A2B) -- (A2E);

\coordinate (A3) at (3,1);
\path (A3) ++(0:0.5) coordinate (A3A);
\path (A3) ++(60:0.5) coordinate (A3B);
\path (A3) ++(120:0.5) coordinate (A3C);
\path (A3) ++(180:0.5) coordinate (A3D);
\path (A3) ++(240:0.5) coordinate (A3E);
\path (A3) ++(300:0.5) coordinate (A3F);
\draw[thick] (A3A) -- (A3B);
\draw[thick] (A3C) -- (A3D);
\draw[thick] (A3D) -- (A3E);
\draw[thick] (A3E) -- (A3F);
\draw[thick] (A3F) -- (A3A);
\draw[thick] (A3B) -- (A3F);
\draw[thick] (A3C) -- (A3F);
\draw[thick] (A3D) -- (A3F);

\coordinate (A4) at (-3,1);
\path (A4) ++(0:0.5) coordinate (A4A);
\path (A4) ++(60:0.5) coordinate (A4B);
\path (A4) ++(120:0.5) coordinate (A4C);
\path (A4) ++(180:0.5) coordinate (A4D);
\path (A4) ++(240:0.5) coordinate (A4E);
\path (A4) ++(300:0.5) coordinate (A4F);
\draw[thick] (A4A) -- (A4B);
\draw[thick] (A4C) -- (A4D);
\draw[thick] (A4D) -- (A4E);
\draw[thick] (A4E) -- (A4F);
\draw[thick] (A4F) -- (A4A);
\draw[thick] (A4A) -- (A4D);
\draw[thick] (A4B) -- (A4D);
\draw[thick] (A4F) -- (A4D);

\coordinate (A5) at (-1,2);
\path (A5) ++(0:0.5) coordinate (A5A);
\path (A5) ++(60:0.5) coordinate (A5B);
\path (A5) ++(120:0.5) coordinate (A5C);
\path (A5) ++(180:0.5) coordinate (A5D);
\path (A5) ++(240:0.5) coordinate (A5E);
\path (A5) ++(300:0.5) coordinate (A5F);
\draw[thick] (A5A) -- (A5B);
\draw[thick] (A5C) -- (A5D);
\draw[thick] (A5D) -- (A5E);
\draw[thick] (A5E) -- (A5F);
\draw[thick] (A5F) -- (A5A);
\draw[thick] (A5B) -- (A5D);
\draw[thick] (A5B) -- (A5E);
\draw[thick] (A5A) -- (A5E);

\coordinate (A6) at (1,2);
\path (A6) ++(0:0.5) coordinate (A6A);
\path (A6) ++(60:0.5) coordinate (A6B);
\path (A6) ++(120:0.5) coordinate (A6C);
\path (A6) ++(180:0.5) coordinate (A6D);
\path (A6) ++(240:0.5) coordinate (A6E);
\path (A6) ++(300:0.5) coordinate (A6F);
\draw[thick] (A6A) -- (A6B);
\draw[thick] (A6C) -- (A6D);
\draw[thick] (A6D) -- (A6E);
\draw[thick] (A6E) -- (A6F);
\draw[thick] (A6F) -- (A6A);
\draw[thick] (A6B) -- (A6F);
\draw[thick] (A6B) -- (A6E);
\draw[thick] (A6C) -- (A6E);

\coordinate (A7) at (-2,3);
\path (A7) ++(0:0.5) coordinate (A7A);
\path (A7) ++(60:0.5) coordinate (A7B);
\path (A7) ++(120:0.5) coordinate (A7C);
\path (A7) ++(180:0.5) coordinate (A7D);
\path (A7) ++(240:0.5) coordinate (A7E);
\path (A7) ++(300:0.5) coordinate (A7F);
\draw[thick, gray] (A7A) -- (A7B);
\draw[thick, gray] (A7C) -- (A7D);
\draw[thick, gray] (A7D) -- (A7E);
\draw[thick, gray] (A7E) -- (A7F);
\draw[thick, gray] (A7F) -- (A7A);
\draw[thick, gray] (A7A) -- (A7C);
\draw[thick, gray] (A7A) -- (A7D);
\draw[thick, gray] (A7D) -- (A7F);

\coordinate (A8) at (-5,3);
\path (A8) ++(0:0.5) coordinate (A8A);
\path (A8) ++(60:0.5) coordinate (A8B);
\path (A8) ++(120:0.5) coordinate (A8C);
\path (A8) ++(180:0.5) coordinate (A8D);
\path (A8) ++(240:0.5) coordinate (A8E);
\path (A8) ++(300:0.5) coordinate (A8F);
\draw[thick] (A8A) -- (A8B);
\draw[thick] (A8C) -- (A8D);
\draw[thick] (A8D) -- (A8E);
\draw[thick] (A8E) -- (A8F);
\draw[thick] (A8F) -- (A8A);
\draw[thick] (A8A) -- (A8E);
\draw[thick] (A8A) -- (A8D);
\draw[thick] (A8B) -- (A8D);

\coordinate (A9) at (2,3);
\path (A9) ++(0:0.5) coordinate (A9A);
\path (A9) ++(60:0.5) coordinate (A9B);
\path (A9) ++(120:0.5) coordinate (A9C);
\path (A9) ++(180:0.5) coordinate (A9D);
\path (A9) ++(240:0.5) coordinate (A9E);
\path (A9) ++(300:0.5) coordinate (A9F);
\draw[thick, gray] (A9A) -- (A9B);
\draw[thick, gray] (A9C) -- (A9D);
\draw[thick, gray] (A9D) -- (A9E);
\draw[thick, gray] (A9E) -- (A9F);
\draw[thick, gray] (A9F) -- (A9A);
\draw[thick, gray] (A9A) -- (A9C);
\draw[thick, gray] (A9C) -- (A9F);
\draw[thick, gray] (A9F) -- (A9D);

\coordinate (A10) at (5,3);
\path (A10) ++(0:0.5) coordinate (A10A);
\path (A10) ++(60:0.5) coordinate (A10B);
\path (A10) ++(120:0.5) coordinate (A10C);
\path (A10) ++(180:0.5) coordinate (A10D);
\path (A10) ++(240:0.5) coordinate (A10E);
\path (A10) ++(300:0.5) coordinate (A10F);
\draw[thick] (A10A) -- (A10B);
\draw[thick] (A10C) -- (A10D);
\draw[thick] (A10D) -- (A10E);
\draw[thick] (A10E) -- (A10F);
\draw[thick] (A10F) -- (A10A);
\draw[thick] (A10B) -- (A10F);
\draw[thick] (A10F) -- (A10C);
\draw[thick] (A10C) -- (A10E);

\coordinate (A11) at (0,4);
\path (A11) ++(0:0.5) coordinate (A11A);
\path (A11) ++(60:0.5) coordinate (A11B);
\path (A11) ++(120:0.5) coordinate (A11C);
\path (A11) ++(180:0.5) coordinate (A11D);
\path (A11) ++(240:0.5) coordinate (A11E);
\path (A11) ++(300:0.5) coordinate (A11F);
\draw[thick] (A11A) -- (A11B);
\draw[thick] (A11C) -- (A11D);
\draw[thick] (A11D) -- (A11E);
\draw[thick] (A11E) -- (A11F);
\draw[thick] (A11F) -- (A11A);
\draw[thick] (A11A) -- (A11E);
\draw[thick] (A11B) -- (A11E);
\draw[thick] (A11C) -- (A11E);

\coordinate (A12) at (-3,5);
\path (A12) ++(0:0.5) coordinate (A12A);
\path (A12) ++(60:0.5) coordinate (A12B);
\path (A12) ++(120:0.5) coordinate (A12C);
\path (A12) ++(180:0.5) coordinate (A12D);
\path (A12) ++(240:0.5) coordinate (A12E);
\path (A12) ++(300:0.5) coordinate (A12F);
\draw[thick] (A12A) -- (A12B);
\draw[thick] (A12C) -- (A12D);
\draw[thick] (A12D) -- (A12E);
\draw[thick] (A12E) -- (A12F);
\draw[thick] (A12F) -- (A12A);
\draw[thick] (A12A) -- (A12C);
\draw[thick] (A12A) -- (A12D);
\draw[thick] (A12A) -- (A12E);

\coordinate (A13) at (3,5);
\path (A13) ++(0:0.5) coordinate (A13A);
\path (A13) ++(60:0.5) coordinate (A13B);
\path (A13) ++(120:0.5) coordinate (A13C);
\path (A13) ++(180:0.5) coordinate (A13D);
\path (A13) ++(240:0.5) coordinate (A13E);
\path (A13) ++(300:0.5) coordinate (A13F);
\draw[thick] (A13A) -- (A13B);
\draw[thick] (A13C) -- (A13D);
\draw[thick] (A13D) -- (A13E);
\draw[thick] (A13E) -- (A13F);
\draw[thick] (A13F) -- (A13A);
\draw[thick] (A13A) -- (A13C);
\draw[thick] (A13C) -- (A13E);
\draw[thick] (A13C) -- (A13F);

\coordinate (A14) at (0,6);
\path (A14) ++(0:0.5) coordinate (A14A);
\path (A14) ++(60:0.5) coordinate (A14B);
\path (A14) ++(120:0.5) coordinate (A14C);
\path (A14) ++(180:0.5) coordinate (A14D);
\path (A14) ++(240:0.5) coordinate (A14E);
\path (A14) ++(300:0.5) coordinate (A14F);
\draw[thick] (A14A) -- (A14B);
\draw[thick] (A14C) -- (A14D);
\draw[thick] (A14D) -- (A14E);
\draw[thick] (A14E) -- (A14F);
\draw[thick] (A14F) -- (A14A);
\draw[thick] (A14A) -- (A14C);
\draw[thick] (A14C) -- (A14E);
\draw[thick] (A14E) -- (A14A);


\draw[thick, shorten <=15pt, shorten >=15pt] (A1) -- (A2);
\draw[thick, shorten <=15pt, shorten >=15pt] (A1) -- (A3);
\draw[thick, shorten <=15pt, shorten >=15pt] (A1) -- (A4);
\draw[thick, shorten <=15pt, shorten >=15pt] (A2) -- (A5);
\draw[thick, shorten <=15pt, shorten >=15pt] (A2) -- (A6);
\draw[thick, gray, shorten <=15pt, shorten >=15pt] (A3) -- (A9);
\draw[thick, shorten <=15pt, shorten >=15pt] (A3) -- (A10);
\draw[thick, gray, shorten <=15pt, shorten >=15pt] (A4) -- (A7);
\draw[thick, shorten <=15pt, shorten >=15pt] (A4) -- (A8);
\draw[thick, shorten <=15pt, shorten >=15pt] (A5) -- (A8);
\draw[thick, shorten <=15pt, shorten >=15pt] (A5) -- (A11);
\draw[thick, shorten <=15pt, shorten >=15pt] (A6) -- (A10);
\draw[thick, shorten <=15pt, shorten >=15pt] (A6) -- (A11);
\draw[thick, gray, shorten <=15pt, shorten >=15pt] (A7) -- (A9);
\draw[thick, gray, shorten <=15pt, shorten >=15pt] (A7) -- (A12);
\draw[thick, shorten <=15pt, shorten >=15pt] (A8) -- (A12);
\draw[thick, gray, shorten <=15pt, shorten >=15pt] (A9) -- (A13);
\draw[thick, shorten <=15pt, shorten >=15pt] (A10) -- (A13);
\draw[thick, shorten <=15pt, shorten >=15pt] (A11) -- (A14);
\draw[thick, shorten <=15pt, shorten >=15pt] (A12) -- (A14);
\draw[thick, shorten <=15pt, shorten >=15pt] (A13) -- (A14);

\end{tikzpicture}
\end{center}
\end{example}

\begin{example}
Now, using the previous example, we investigate what happens if we contract a mutable variable. Let us start by mutating the variable represented by the dotted diagonal below:\\

\begin{center}
\begin{tikzpicture}[scale=0.9]
\coordinate (A13) at (3,5);
\path (A13) ++(0:0.5) coordinate (A13A);
\path (A13) ++(60:0.5) coordinate (A13B);
\path (A13) ++(120:0.5) coordinate (A13C);
\path (A13) ++(180:0.5) coordinate (A13D);
\path (A13) ++(240:0.5) coordinate (A13E);
\path (A13) ++(300:0.5) coordinate (A13F);
\draw[thick] (A13A) -- (A13B) -- (A13C) -- (A13D) -- (A13E) -- (A13F) -- cycle;
\draw[thick, densely dotted] (A13A) -- (A13C);
\draw[thick] (A13C) -- (A13E);
\draw[thick] (A13C) -- (A13F);
\end{tikzpicture}
\end{center}

\noindent Note that if $e$ is a mutable variable, then we have
    \begin{align*}
    \mathcal{B}(M/e) &=\Big \{ X \subset E-e : M|_e \textnormal{ has a basis } B \textnormal{ such that } X \cup B \in \mathcal{B}(M) \Big \}\\
    &=\Big \{ X \subset E-e : X \cup e \in \mathcal{B}(M) \Big \}\\
    &=\Big \{ B-e : B \in \mathcal{B}(M) \textnormal{ and } e \in B \Big \}.
    \end{align*}
This will easily imply that the set generating the bases of the new matroid is the one that appears in the following graph:\\
\begin{center}
\begin{tikzpicture}[scale=0.9]

\coordinate (A7) at (-2,3);
\path (A7) ++(0:0.5) coordinate (A7A);
\path (A7) ++(60:0.5) coordinate (A7B);
\path (A7) ++(120:0.5) coordinate (A7C);
\path (A7) ++(180:0.5) coordinate (A7D);
\path (A7) ++(240:0.5) coordinate (A7E);
\path (A7) ++(300:0.5) coordinate (A7F);
\draw[thick, gray] (A7A) -- (A7B) -- (A7C) -- (A7D) -- (A7E) -- (A7F) -- cycle;
\draw[thick, gray] (A7A) -- (A7D);
\draw[thick, gray] (A7D) -- (A7F);

\coordinate (A9) at (2,3);
\path (A9) ++(0:0.5) coordinate (A9A);
\path (A9) ++(60:0.5) coordinate (A9B);
\path (A9) ++(120:0.5) coordinate (A9C);
\path (A9) ++(180:0.5) coordinate (A9D);
\path (A9) ++(240:0.5) coordinate (A9E);
\path (A9) ++(300:0.5) coordinate (A9F);
\draw[thick, gray] (A9A) -- (A9B) -- (A9C) -- (A9D) -- (A9E) -- (A9F) -- cycle;
\draw[thick, gray] (A9C) -- (A9F);
\draw[thick, gray] (A9F) -- (A9D);

\coordinate (A12) at (-3,5);
\path (A12) ++(0:0.5) coordinate (A12A);
\path (A12) ++(60:0.5) coordinate (A12B);
\path (A12) ++(120:0.5) coordinate (A12C);
\path (A12) ++(180:0.5) coordinate (A12D);
\path (A12) ++(240:0.5) coordinate (A12E);
\path (A12) ++(300:0.5) coordinate (A12F);
\draw[thick] (A12A) -- (A12B) -- (A12C) -- (A12D) -- (A12E) -- (A12F) -- cycle;
\draw[thick] (A12A) -- (A12D);
\draw[thick] (A12A) -- (A12E);

\coordinate (A13) at (3,5);
\path (A13) ++(0:0.5) coordinate (A13A);
\path (A13) ++(60:0.5) coordinate (A13B);
\path (A13) ++(120:0.5) coordinate (A13C);
\path (A13) ++(180:0.5) coordinate (A13D);
\path (A13) ++(240:0.5) coordinate (A13E);
\path (A13) ++(300:0.5) coordinate (A13F);
\draw[thick] (A13A) -- (A13B) -- (A13C) -- (A13D) -- (A13E) -- (A13F) -- cycle;
\draw[thick] (A13C) -- (A13E);
\draw[thick] (A13C) -- (A13F);

\coordinate (A14) at (0,6);
\path (A14) ++(0:0.5) coordinate (A14A);
\path (A14) ++(60:0.5) coordinate (A14B);
\path (A14) ++(120:0.5) coordinate (A14C);
\path (A14) ++(180:0.5) coordinate (A14D);
\path (A14) ++(240:0.5) coordinate (A14E);
\path (A14) ++(300:0.5) coordinate (A14F);
\draw[thick] (A14A) -- (A14B) -- (A14C) -- (A14D) -- (A14E) -- (A14F) -- cycle;
\draw[thick] (A14C) -- (A14E);
\draw[thick] (A14E) -- (A14A);


\draw[thick, gray, shorten <=15pt, shorten >=15pt] (A7) -- (A9);
\draw[thick, gray, shorten <=15pt, shorten >=15pt] (A7) -- (A12);
\draw[thick, gray, shorten <=15pt, shorten >=15pt] (A9) -- (A13);
\draw[thick, shorten <=15pt, shorten >=15pt] (A12) -- (A14);
\draw[thick, shorten <=15pt, shorten >=15pt] (A13) -- (A14);

\end{tikzpicture}
\end{center}

Next, we mutate the variable represented by the dotted diagonal below:\\

\begin{center}
\begin{tikzpicture}[scale=0.9]
\coordinate (A13) at (3,5);
\path (A13) ++(0:0.5) coordinate (A13A);
\path (A13) ++(60:0.5) coordinate (A13B);
\path (A13) ++(120:0.5) coordinate (A13C);
\path (A13) ++(180:0.5) coordinate (A13D);
\path (A13) ++(240:0.5) coordinate (A13E);
\path (A13) ++(300:0.5) coordinate (A13F);
\draw[thick] (A13A) -- (A13B) -- (A13C) -- (A13D) -- (A13E) -- (A13F) -- cycle;
\draw[thick] (A13A) -- (A13C);
\draw[thick] (A13C) -- (A13E);
\draw[thick, densely dotted] (A13C) -- (A13F);
\end{tikzpicture}
\end{center}

\noindent Using the same logic, it is not hard to see that the graph representing the set that generates the bases is:

\begin{center}
\begin{tikzpicture}[scale=0.9]

\coordinate (A3) at (3,1);
\path (A3) ++(0:0.5) coordinate (A3A);
\path (A3) ++(60:0.5) coordinate (A3B);
\path (A3) ++(120:0.5) coordinate (A3C);
\path (A3) ++(180:0.5) coordinate (A3D);
\path (A3) ++(240:0.5) coordinate (A3E);
\path (A3) ++(300:0.5) coordinate (A3F);
\draw[thick] (A3A) -- (A3B) -- (A3C) -- (A3D) -- (A3E) -- (A3F) -- cycle;
\draw[thick] (A3B) -- (A3F);
\draw[thick] (A3D) -- (A3F);

\coordinate (A9) at (2,3);
\path (A9) ++(0:0.5) coordinate (A9A);
\path (A9) ++(60:0.5) coordinate (A9B);
\path (A9) ++(120:0.5) coordinate (A9C);
\path (A9) ++(180:0.5) coordinate (A9D);
\path (A9) ++(240:0.5) coordinate (A9E);
\path (A9) ++(300:0.5) coordinate (A9F);
\draw[thick, gray] (A9A) -- (A9B) -- (A9C) -- (A9D) -- (A9E) -- (A9F) -- cycle;
\draw[thick, gray] (A9A) -- (A9C);
\draw[thick, gray] (A9F) -- (A9D);

\coordinate (A10) at (5,3);
\path (A10) ++(0:0.5) coordinate (A10A);
\path (A10) ++(60:0.5) coordinate (A10B);
\path (A10) ++(120:0.5) coordinate (A10C);
\path (A10) ++(180:0.5) coordinate (A10D);
\path (A10) ++(240:0.5) coordinate (A10E);
\path (A10) ++(300:0.5) coordinate (A10F);
\draw[thick] (A10A) -- (A10B) -- (A10C) -- (A10D) -- (A10E) -- (A10F) -- cycle;
\draw[thick] (A10B) -- (A10F);
\draw[thick] (A10C) -- (A10E);

\coordinate (A13) at (3,5);
\path (A13) ++(0:0.5) coordinate (A13A);
\path (A13) ++(60:0.5) coordinate (A13B);
\path (A13) ++(120:0.5) coordinate (A13C);
\path (A13) ++(180:0.5) coordinate (A13D);
\path (A13) ++(240:0.5) coordinate (A13E);
\path (A13) ++(300:0.5) coordinate (A13F);
\draw[thick] (A13A) -- (A13B) -- (A13C) -- (A13D) -- (A13E) -- (A13F) -- cycle;
\draw[thick] (A13A) -- (A13C);
\draw[thick] (A13C) -- (A13E);


\draw[thick, gray, shorten <=15pt, shorten >=15pt] (A3) -- (A9);
\draw[thick, shorten <=15pt, shorten >=15pt] (A3) -- (A10);
\draw[thick, gray, shorten <=15pt, shorten >=15pt] (A9) -- (A13);
\draw[thick, shorten <=15pt, shorten >=15pt] (A10) -- (A13);

\end{tikzpicture}
\end{center}

Interestingly, one can see here that the mutation of different mutable variables can give two different matroids. This is true because the size in the graph after the first mutation is 5, while it is 4 in the second case.
\end{example}

\begin{remark}
Note that the cluster matroids can be described combinatorially in a nice way, and this description can still be used to describe their minors. However, one can easily see that the class of cluster matroids is not closed under minors. One evidence is the difference of the numbers that we got after contracting two different mutable variables in the previous example.
\end{remark}


\section{result summary and remarks}
\begin{remark}
The study of cluster matroids reveals several key structural properties. While n-gon triangulations do not form a set of matroid bases, this limitation is resolved by leveraging the algebraic independence of extended clusters, defining a structure called a cluster matroid. Such a matroid is connected precisely when mutable variables are present. Removing frozen variables from the system results in a uniform matroid, and the Laurent phenomenon enables the construction of a representable matroid from cluster algebra seeds. Notably, deleting or contracting any set of frozen variables always yields another cluster matroid. However, contracting mutable variables does not guarantee analogous consistency, as it may produce distinct matroids. These results highlight the nuanced interplay between frozen and mutable variables in shaping the matroidal framework of cluster structures. The following table summarizes key characteristics and results of this paper.
 \\

\begin{table}[h]
\centering
\caption{Properties of Cluster Matroids}
\label{tab:cluster-matroids}
\begin{tabularx}{\linewidth}{|l|X|}
\hline
\textbf{Question/Property} & \textbf{Answer/Result} \\
\hline
Do $n$-gon triangulations form matroid bases? & No \\ 
\hline
Can this issue be resolved? & Yes, define a \textit{cluster matroid} via algebraic independence \\ 
\hline
Connectedness condition & Connected $\iff$ presence of mutable variables only\\ 
\hline
Effect of removing frozen variables & Yields a uniform matroid \\ 
\hline
Deletion/contraction of frozen variables & Always produces another cluster matroid \\ 
\hline
Contraction of mutable variables & Not guaranteed; may generate distinct matroids \\ 
\hline
What about representable matroids? & They can be produced using the Laurent phenomenon \\ 
\hline
\end{tabularx}
\end{table}

\end{remark}
\section{Further destination}
This paper is a starting point for a connection between matroids and cluster algebras that can be interestingly deepened. We believe that this can serve as a framework for a new shining path in an interesting area of mathematics that has the potential to grow significantly if it receives proper care from interested mathematicians. It would be very nice to see both theories' simultaneous development in algebraic and combinatorial ways. The work of this paper has the flavor of what matroid theorists usually do. Of course, much deeper work can be built on this, like studying the minors, duals, representability, etc. Other interesting questions arise if we make changes to cluster algebras that build cluster matroids. For example, one might wonder what happens if the ground field of the cluster algebra is changed or if one of its exchange matrix entries is changed. Moreover, the classification of this new class of matroid can be studied and help in answering questions regarding algebraic matroids, which have many unsolved problems yet.\\

It is worth mentioning that this is not the first work that relates both matroid theory and cluster algebras together; there is another interesting path that has a more cluster-algebra flavor. For instance, the reader is referred to the work of Karp and Williams \cite{KW} that is followed by a sequence of other significant papers of the same direction.

\section*{Acknowledgements}
The author thanks Professor James Oxley for the useful discussion in preparing this paper.



\end{document}